\newcommand{\Hmm}[1]{\leavevmode{\marginpar{\tiny%
$\hbox to 0mm{\hspace*{-0.5mm}$\leftarrow$\hss}%
\vcenter{\vrule depth 0.1mm height 0.1mm width \the\marginparwidth}%
\hbox to 0mm{\hss$\rightarrow$\hspace*{-0.5mm}}$\\\relax\raggedright #1}}}
\newcommand{\nc}{\newcommand}
\nc{\les}{\lesssim}
\nc{\nit}{\noindent}
\nc{\nn}{\nonumber}
\nc{\D}{\partial}
\nc{\diff}[2]{\frac{d #1}{d #2}}
\nc{\diffn}[3]{\frac{d^{#3} #1}{d {#2}^{#3}}}
\nc{\pdiff}[2]{\frac{\partial #1}{\partial #2}}
\nc{\pdiffn}[3]{\frac{\partial^{#3} #1}{\partial{#2}^{#3}}}
\nc{\abs}[1] {\lvert #1 \rvert}
\nc{\cAc}{{\cal A}_c}
\nc{\cE}{{\cal E}}
\nc{\cF}{{\mathcal F}}
\nc{\cP}{{\cal P}}
\nc{\cV}{{\cal V}}
\nc{\cQ}{{\cal Q}}
\nc{\cGin}{{\cal G}_{\rm in}}
\nc{\cGout}{{\cal G}_{\rm out}}
\nc{\cO}{{\cal O}}
\nc{\Lav}{{\cal L}_{\rm av}}
\nc{\cL}{{\cal L}}
\nc{\cB}{{\cal B}}
\nc{\cZ}{{\cal Z}}
\nc{\cR}{{\cal R}}
\nc{\cT}{{\cal T}}
\nc{\cY}{{\cal Y}}
\nc{\cX}{{\cal X}}
\nc{\cXT}{{{\cal X}(T)}}
\nc{\cBT}{{{\cal B}(T)}}
\nc{\vD}{{\vec \mathcal{D}}}
\nc{\efield}{\mathcal{E}}
\nc{\vE}{{\vec \efield}}
\nc{\vB}{{\vec \mathcal{B}}}
\nc{\vH}{{\vec \mathcal{H}}}
\nc{\ty}{{\tilde y}}
\nc{\tu}{{\tilde u}}
\nc{\tV}{{\tilde V}}
\nc{\Pc}{{\bf P_c}}
\nc{\bx}{{\bf x}}
\nc{\bX}{{\bf X}}
\nc{\bXYZ}{{\bf XYZ}}
\nc{\bY}{{\bf Y}}
\nc{\bF}{{\bf F}}
\nc{\bS}{{\bf S}}
\nc{\dV}{{\delta V}}
\nc{\dE}{{\delta E}}
\nc{\TT}{{\Theta}}
\nc{\dPsi}{{\delta\Psi}}
\nc{\order}{{\cal O}}
\nc{\Rout}{R_{\rm out}}
\nc{\eplus}{e_+}
\nc{\eminus}{e_-}
\nc{\epm}{e_\pm}
\nc{\eps}{\varepsilon}
\nc{\vnabla}{{\vec\nabla}}
\nc{\G}{\Gamma}
\nc{\w}{\omega}
\nc{\mh}{h}
\nc{\mg}{g}
\nc{\vphi}{\varphi}
\nc{\tlambda}{\tilde\lambda}
\nc{\be}{\begin{equation}}
\nc{\ee}{\end{equation}}
\nc{\ba}{\begin{eqnarray}}
\nc{\ea}{\end{eqnarray}}
\nc{\g}{\gamma}
\nc{\ol}{\overline}
\newtheorem{theorem}{Theorem}[section]
\newtheorem{lemma}[theorem]{Lemma}
\newtheorem{prop}[theorem]{Proposition}
\newtheorem{corollary}[theorem]{Corollary}
\newtheorem{defin}[theorem]{Definition}
\nc{\pT}{\partial_T}
\nc{\pz}{\partial_z}
\nc{\pt}{\partial_t}
\nc{\la}{\langle}
\nc{\ra}{\rangle}
\nc{\infint}{\int_{-\infty}^{\infty}}
\nc{\halfwidth}{6.5cm}
\nc{\figwidth}{10cm}
\nc{\nlayers}{L} \nc{\nsectors}{M}
\nc{\indicator}{\mathbf{1}}
\nc{\Rhole}{R_{\rm hole}}
\nc{\Rring}{R_{\rm ring}}
\nc{\neff}{n_{\rm eff}}
\nc{\Frem}{F_{\rm rem}}
\nc{\Real}{\mathbb R}
\nc{\Z}{\mathbb Z}
\nc{\DD}{\Delta}
\nc{\cD}{\mathcal D}
\nc{\lnorm}{\left\|}
\nc{\rnorm}{\right\|}
\nc{\rnormp}{\right\|_{\ell^{p,\eps}}}
\nc{\rar}{\rightarrow}
\nc{\sgn}{{\rm sign}}
\date{\today}
\begin{document}

\title[Global smoothing for periodic KdV]{Global Smoothing for the Periodic KdV Evolution}

\author{M.~B.~Erdo\smash{\u{g}}an and  N.~Tzirakis}
\thanks{The authors were partially supported by NSF grants DMS-0900865 (B.~E.), and DMS-0901222 (N.~T.) }

\address{Department of Mathematics \\
University of Illinois \\
Urbana, IL 61801, U.S.A.}

\email{berdogan@uiuc.edu \\ tzirakis@math.uiuc.edu }

\begin{abstract}
The Korteweg-de Vries (KdV) equation with periodic boundary conditions is considered. It is shown that for $H^s$ initial data, $s>-1/2$, and for any $s_1<\min(3s+1,s+1)$, the difference of the nonlinear and linear evolutions is in $H^{s_1}$ for all times, with at most polynomially growing $H^{s_1}$ norm.
The result also extends to   KdV with a smooth, mean zero, time-dependent potential in the case $s\geq 0$. Our result and a theorem of Oskolkov for the Airy evolution imply that if one starts with   continuous and bounded variation initial data then the solution of KdV (given by the $L^2$ theory  of Bourgain) is a continuous function of space and time. In addition, we demonstrate smoothing for the modified KdV equation on the torus for $s>1/2$.
\end{abstract}

\maketitle

\section{Introduction}
In this paper we study the Korteweg de Vries (KdV) equation on the torus
\begin{equation}\label{eq:kdv}
\left\{
\begin{matrix}
u_{t}+u_{xxx} +2uu_{x}=0, & x \in {\mathbb T}, & t\in {\mathbb R},\\
u(x,0)=u_{0}(x)\in H^{s}({\mathbb T}),
\end{matrix}
\right.
\end{equation}
as well as the perturbed version with a smooth, mean-zero space-time potential
\begin{equation}\label{eq:potentialkdv}
\left\{
\begin{matrix}
u_{t}+u_{xxx} +(u^2+\lambda u)_{x}=0, & x \in {\mathbb T}, & t\in {\mathbb R},\\
u(x,0)=u_{0}(x)\in H^{s}({\mathbb T}).
\end{matrix}
\right.
\end{equation}
For both equations, we prove that the difference of the evolution with the Airy evolution is smoother than both the Airy and the nonlinear evolution.   This smoothing property is not apparent if one views the nonlinear evolution as a perturbation of the linear flow and apply standard Picard iteration techniques to absorb the nonlinear derivative term. The result will follow from a combination of the method of normal forms (through differentiation by parts) inspired by the result in \cite{bit} and the restricted norm method of Bourgain, \cite{Bou2}. As it is well-known, KdV is a completely integrable system with infinitely many conserved quantities. However, our method  in this paper do not rely on the integrability structure of KdV, and
thus it can be applied to other dispersive models. For example even if we perturb the KdV equation with a smooth function and break the integrability structure, we are still able to maintain the same smoothing properties. 

The history of the KdV equation is quite rich and the literature extensive. In this introduction we summarize some recent developments that are most relevant to our result. Note that all results were  proved in the subset of solutions that have mean zero. This assumption can easily be  dropped as we explain later in the paper since smooth solutions of KdV satisfy momentum conservation:
$$\int_{-\pi}^{\pi}u(x,t)dx=\int_{-\pi}^{\pi}u(x,0)dx.$$
To state some known results we start with a definition:
\begin{defin} Let $X$ be a Banach space. Starting with initial data $u_{0} \in H^{s}({\mathbb T})$, we say that the equation \eqref{eq:kdv} or \eqref{eq:potentialkdv} is locally well-posed, if there exists $T>0$ such that there exists a unique solution $u \in X \cap C_{t}^{0}H_{x}^{s}([0,T]\times \mathbb T)$. We also demand that there is continuity with respect to the initial data in the appropriate topology. If $T$ can be taken to be arbitrarily large then we say that the problem is globally well-posed. 
\end{defin} 

Well-posedness for nonsmooth data was first derived by Bourgain, \cite{Bou2}. He proved that the KdV equation is locally well-posed in $L^{2}(\mathbb T)$. The existence of local-in-time solutions for KdV was investigated in a class of function spaces $X$ and $Y$ that satisfy inequalities of the form
$$\|\partial_{x}(u^2)\|_Y \leq C \|u\|_{X}^2.$$ 
The definition of the spaces adopted to the KdV equation will be given later but the reader should keep in mind that these spaces will incorporate the dispersive character of the equation. 
The space-time Fourier transform of the linear solution is supported on the characteristic surface, $\tau=k^3$. Bourgain observed that if one first localizes in time, then  the Fourier transform of the nonlinear solution still concentrates near the characteristic surface due to the dispersive smoothing effect of $\partial_t-L$. Later, in \cite{kpv}, Kenig, Ponce, Vega extended the local theory to 
$H^s$, $s>-\frac12$, and proved that the estimate above fails for any $s<-\frac{1}{2}$.

For $L^2$ data these result can be extended to the case of arbitrary smooth perturbations, \cite{sta}. Due to energy conservation 
$$\int_{-\pi}^{\pi}u^{2}(x,t)dx=\int_{-\pi}^{\pi}u^{2}(x,0)dx$$ the KdV evolution is globally well-posed in $L^{2}(\mathbb T)$ and the solution is in $ C(\mathbb R;L^{2}(\mathbb T)$). Colliander, Keel, Staffilani, Takaoka, Tao, \cite{ckstt}, subsequently showed that KdV is globally well-posed in
$H^{s}(\mathbb T)$ for any $s \geq -\frac{1}{2}$ while adding a local well-posedness result for the endpoint $s=-\frac{1}{2}$. To extend  the local solutions globally in time they used the ``I-method'', developing a theory of almost conserved quantities starting with the energy. Although the initial data have infinite energy they showed that a smoothed out version of the solution cannot increase much in energy  going from one local-in-time interval to another. The iteration of this method leads to polynomial in time bounds for the rough Sobolev norm $H^s$, establishing the result. In \cite{kt},   Kappeler and   Topalov extended the latter result using the integrability properties of the equation and proved that   KdV admits global solutions in $H^{s}(\mathbb T)$ for any $s \geq -1$. 

Recently, in \cite{bit}, Babin, Ilyin, ant Titi gave a new proof of the $L^2$ theorem of Bourgain using normal form methods. Similar ideas were developed by Shatah \cite{sha}. This method was also used
in \cite{ETZ2} and \cite{ETZ3} to study near-linear behavior of certain dispersive models. They wrote the equation on the Fourier side and use differentiation by parts taking advantage of the large denominators that appear due to the dispersion. The method then proceeds  by obtaining estimates only on $L_{t}^{\infty}H_x^{s}$ norms. The clear advantage of their method apart from the simplicitly is that they can obtain (a property that was observed by Kwon and Oh in \cite{ko}) unconditional uniqueness for KdV in $C_t^{0}L_x^2$. In light of the theorem of   Christ, in \cite{chr2}, this result appears to be sharp. The drawback is that they cannot consider rougher than $L^2$ solutions and   they can only prove statements on negatively indexed Sobolev spaces under $L^2$ assumptions. The problem appears to be that  after two normal form transformations, the oscillations are killed,  and they are forced to work on $L_{t}^{\infty}H_x^{s}$, a space not optimal for proving multilinear estimates for low values of $s$. We will also use the differentiation by parts method in our proof, however, after one differentiation by parts we apply the restricted norm method of Bourgain that incorporates these oscillation in a nontrivial way. It is this combination that is enabling us to obtain a smoother solution starting with rough initial data for the full problem.

\subsection{Main Results}
\begin{theorem}\label{theo:main1}
Fix $s>-1/2$ and $s_1<\min(3s+1,s+1)$. Consider the real valued solution of KdV \eqref{eq:kdv}
on $\mathbb{T}\times \mathbb R$ with initial data $u(x,0)=g(x)\in H^s$. Assume that we have a growth bound $\|u(t)\|_{H^s}\leq C(\|g\|_{H^s}) (1+|t|)^{\alpha(s)}$. Then $u(t)-e^{tL}g\in C^0_tH^{s_1}_x$ and  
\[
\|u(t)- e^{tL}g\|_{H^{s_1}} \leq  C(s,s_1,\|g\|_{H^s}) \langle t\rangle^{1+6\alpha(s)},
\]
where $L=-\partial_x^3+\langle g\rangle \partial_x$.
\end{theorem}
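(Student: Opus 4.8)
The plan is to work in the interaction representation, where the nonlinearity becomes oscillatory and the algebraic structure of the KdV resonances can be exploited. Since smooth solutions conserve momentum, a Galilean change of frame removes the drift and reduces the problem to the mean-zero case, in which $L=-\partial_x^3$. Writing $u(x,t)=\sum_k u_k(t)e^{ikx}$ with $u_0\equiv 0$ and setting $v_k(t)=e^{-ik^3t}u_k(t)$, the unimodularity of $e^{ik^3t}$ gives $\|u(t)-e^{tL}g\|_{H^{s_1}}=\|v(t)-v(0)\|_{H^{s_1}}$, so it suffices to bound the right-hand side. The equation for $v$ reads
\[
\partial_t v_k=-ik\sum_{k_1+k_2=k}e^{-3ikk_1k_2\,t}\,v_{k_1}v_{k_2},
\]
where the phase comes from the identity $k_1^3+k_2^3-k^3=-3kk_1k_2$ on $k_1+k_2=k$. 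The decisive feature is that in the mean-zero regime $k,k_1,k_2\neq 0$, so the phase $3kk_1k_2$ never vanishes and there are no resonances.

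Next I would perform a single differentiation by parts in time. Writing $e^{-3ikk_1k_2 t}=\frac{-1}{3ikk_1k_2}\,\partial_t e^{-3ikk_1k_2 t}$ and moving the derivative off the exponential produces a boundary (quadratic) term carrying the smoothing factor $1/(k_1k_2)$ and, upon substituting the equation back into $\partial_t(v_{k_1}v_{k_2})$, a cubic term in $v$. Thus
\[
v_k(t)-v_k(0)=\tfrac13\big[B_k(t)-B_k(0)\big]+\int_0^t \mathcal N_k(v,v,v)\,ds,\qquad B_k(t)=\sum_{k_1+k_2=k}\frac{e^{-3ikk_1k_2 t}}{k_1k_2}v_{k_1}(t)v_{k_2}(t).
\]
In contrast to Babin--Ilyin--Titi, I would stop after one differentiation by parts and treat the cubic term inside Bourgain's $X^{s,b}$ spaces rather than differentiating again; this is what preserves the oscillation needed below $L^2$.

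The two terms are then estimated separately. For the boundary term I would prove the bilinear bound $\|B(t)\|_{H^{s_1}}\les \|v(t)\|_{H^s}^2=\|u(t)\|_{H^s}^2$; the weight $1/(k_1k_2)$ supplies one net derivative of smoothing, and the high--low interaction (one frequency comparable to $k$, the other small) is the extremal configuration, yielding the constraint $s_1<s+1$. The cubic term is the heart of the matter: here I would establish a trilinear estimate of the form $\|\mathcal N(v,v,v)\|_{X^{s_1,b-1}}\les \|v\|_{X^{s,b}}^3$, in which the explicit oscillation $e^{-3ikk_1k_2 s}$ and the weight $1/(k_1k_2)$ combine with the modulation gains encoded in the $X^{s,b}$ norm. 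A case analysis over the configurations $k_1+k_2+k_3=k$, separating resonant and near-resonant interactions, should produce the sharp gain and the constraint $s_1<3s+1$. I expect this trilinear estimate to be the \emph{main obstacle}: it is precisely the low-regularity near-resonant configurations that limit the smoothing and force the use of the restricted-norm structure in place of a second normal-form transformation.

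Finally I would globalize. On a unit time interval the local theory of Bourgain and Kenig--Ponce--Vega gives $\|u\|_{X^{s,b}}\les \|u(t_0)\|_{H^s}$ for $s>-1/2$, so the two estimates above bound $\|v(t_0+1)-v(t_0)\|_{H^{s_1}}$ by a power of $\|u(t_0)\|_{H^s}$. Summing over $\order(\la t\ra)$ consecutive unit intervals and inserting the hypothesized growth bound $\|u(t)\|_{H^s}\les \la t\ra^{\alpha(s)}$ converts the per-interval estimate into a global one; bookkeeping the powers of $\la t\ra$ accumulated through the quadratic and cubic multilinear estimates yields the stated exponent $1+6\alpha(s)$, while continuity in $t$ follows from the continuity of each piece. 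The same scheme applies to the potential equation \eqref{eq:potentialkdv} for $s\geq 0$, the only change being the separate treatment of the additional smooth potential terms.
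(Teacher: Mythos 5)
Your overall architecture coincides with the paper's: reduce to mean zero, pass to $v_k=e^{-ik^3t}u_k$, differentiate by parts once, bound the boundary bilinear term in $H^{s_1}$ (giving $s_1<s+1$), put the cubic term in $X^{s,b}$ spaces via a trilinear estimate (giving $s_1<3s+1$), and iterate the local theory with the growth bound to get $\langle t\rangle^{1+6\alpha(s)}$. However, there is a genuine gap in your treatment of the cubic term. Your claim that in the mean-zero regime ``there are no resonances'' is true only for the quadratic phase $3kk_1k_2$; after substituting the equation back into $\partial_t(v_{k_1}v_{k_2})$, the cubic term carries the phase $e^{-3it(k_1+k_2)(k_2+k_3)(k_3+k_1)}$, which \emph{does} vanish on three families of frequencies: $S_1=\{k_1=-k,\,k_2=k_3=k\}$, $S_2=\{k_1=j,\,k_2=-j,\,k_3=k\}$, and $S_3=\{k_1=j,\,k_2=k,\,k_3=-j\}$. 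Consequently the trilinear estimate $\|\mathcal N(v,v,v)\|_{X^{s_1,b-1}}\lesssim\|v\|_{X^{s,b}}^3$ that you propose is \emph{false} for the full cubic nonlinearity: the $X^{s,b}$ machinery rests on the modulation inequality $\max_i\langle\tau_i-k_i^3\rangle\gtrsim|(k_1+k_2)(k_1+k_3)(k_2+k_3)|$, which gives nothing on the resonant set, so the resonant part must be split off \emph{before} any $X^{s,b}$ analysis.

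The deeper problem is that ``separating resonant and near-resonant interactions'' does not by itself produce the constraint $s_1<3s+1$. Only the $S_1$ contribution, $\frac{|v_k|^2v_k}{k}$, gains a derivative and can be estimated directly in $H^{s_1}$ for $s_1<3s+1$. The $S_2$ and $S_3$ contributions are of the form $v_k\sum_{j}\frac{|v_j|^2}{j}$ (up to constants), which exhibit \emph{no smoothing whatsoever} — they live only in $H^s$ — and no choice of function space rescues them. The proof goes through only because these terms cancel identically: for real-valued $u$ one has $v_{-j}=\overline{v_j}$, so $|v_j|^2/j$ is odd in $j$ and the sums vanish under the symmetry $j\leftrightarrow -j$. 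This cancellation of the non-smoothing resonant terms is the key structural fact (the paper singles it out as the crucial point, and it is what makes the result survive even with a potential), and it is absent from your outline; without it, the route you describe stalls exactly at the resonant contribution. Once the resonant part is reduced to $\frac{|v_k|^2v_k}{k}$ alone and the nonresonant part is estimated in $X^{s_1,-1/2+\eps}$ against three $X^{s,1/2}$ norms (using Bourgain's $L^6$ Strichartz estimate), the rest of your scheme — including the telescoping over intervals of length $\delta\approx \|u\|_{H^s}^{-3}$, not unit intervals — matches the paper and delivers the stated exponent.
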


{\bf Remark  1.} Note that for any $s>-1/3$ and $g\in H^s$, Theorem~\ref{theo:main1} and Theorem~\ref{thm:I2} below imply that
$$ u(t)- e^{tL}g \in L^2(\mathbb T)$$
for all times. We thus obtain a periodic analogue of Bourgain's theorem in \cite{Bou4}. In this paper he developed a method of high-low frequency decomposition of the initial data to obtain global solutions with infinite energy for the NLS equation on $\mathbb R^2$. In addition he proved that the nonlinear term is smoother than the initial data.  

{\bf Remark 2.} We note that, within the method of differentiation by parts, the range for $s_1$ when $s\leq 0$ in the theorem seems to be optimal up to the endpoint. This is because of the resonant term 
$$
\Big|\frac{u_k|u_k|^2}{k}\Big|=(|u_k||k|^s)^3 |k|^{-3s-1}
$$
appearing after the first differentiation by parts. For  general $H^s$ data, this term can not be in $H^{s_1}$ if $s_1>3s+1$. This also implies that for $s=-1/2$ there is no smoothing within the tools that we use.

{\bf Remark 3. } In an appendix, we demonstrate an analogous statement for the modified KdV equation on the torus.

\begin{theorem}\label{theo:main} Fix $s\geq 0$ and $s_1<  s+1$.
Consider \eqref{eq:potentialkdv} 
where $\lambda\in C^\infty(\mathbb T\times \mathbb R)$ is a mean-zero potential with bounded derivatives and initial data $u(x,0)=g(x)\in H^s$. Assume that we have a growth bound $\|u(t)\|_{H^s}\leq C(\|g\|_{H^s}) T(t)$ for some nondecreasing function $T$ on $[0,\infty)$.
Then $u(t)-e^{tL}g\in C^0_tH^{s_1}_x$ and  
\[
\|u(  t)-  e^{tL}g \|_{H^{s_1}} \leq  C(s,s_1,\|g\|_{H^s}) \langle t\rangle T(t)^9,
\]
where $L=-\partial_x^3+\langle g\rangle \partial_x$.
\end{theorem}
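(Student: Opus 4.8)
The plan is to follow the same strategy as for Theorem~\ref{theo:main1}, namely to write the equation on the Fourier side, perform one differentiation by parts (normal form transformation) to expose the dispersive gain, and then control the resulting nonlinear and potential terms using the restricted norm ($X^{s,b}$) spaces of Bourgain. Let me lay out how I would organize this.

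\begin{proof}[Proof proposal]
The strategy is to reduce the problem to a smoothing estimate in Bourgain's restricted-norm spaces, exactly as in the proof of Theorem~\ref{theo:main1}, and then to track the extra contributions coming from the potential $\lambda$. First I would set $v=u-\langle g\rangle$ to reduce to the mean-zero case (using momentum conservation, and absorbing the linear transport into $L=-\partial_x^3+\langle g\rangle\partial_x$), and pass to the Fourier coefficients $\{u_k(t)\}$. Writing $u_k(t)=e^{i k^3 t}\,a_k(t)$ (with the transport correction folded into $L$), the equation becomes a system in which the quadratic nonlinearity $\partial_x(u^2)$ produces terms with a phase $e^{i t\,\Phi}$, where the resonance function is
\[
\Phi(k,k_1)=k^3-k_1^3-(k-k_1)^3=3kk_1(k-k_1).
\]
On the support of the nonresonant interactions this phase is large, of size $\sim|k||k_1||k-k_1|$, and a single differentiation by parts in $t$ transfers this largeness into a denominator, gaining essentially one derivative. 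This is the mechanism behind the $s_1<s+1$ smoothing. The mean-zero, smooth potential term $(\lambda u)_x$ is linear in $u$ and, because $\lambda$ is smooth with bounded derivatives, contributes only lower-order terms: its Fourier multiplier decays rapidly in the frequency difference, so after the normal form transformation it is straightforward to absorb into the estimates.

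Concretely, after one differentiation by parts I would arrive at an expression of the schematic form
\begin{align*}
u(t)-e^{tL}g
  &= B[u,u](t) - e^{tL}B[g,g](0)
  + \int_0^t e^{(t-t')L}\,R[u,u](t')\,dt'\\
  &\quad + (\text{potential contributions}),
\end{align*}
where $B[\cdot,\cdot]$ is the boundary (normal form) bilinear term carrying the $1/\Phi$ gain and $R[\cdot,\cdot]$ collects the remaining trilinear and resonant terms produced when the time derivative falls on $u$ (via the equation). The heart of the matter is to prove, in the appropriate $X^{s_1,b}$ norm,
\[
\|B[u,u]\|_{X^{s_1,b}} \lesssim \|u\|_{X^{s,b}}^2,
\qquad
\|R[u,u]\|_{X^{s_1,b-1}} \lesssim \|u\|_{X^{s,b}}^2,
\]
with the analogous linear-in-$u$ bounds for the potential terms carrying an extra factor controlled by $\|\lambda\|_{C^N}$. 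These are bilinear estimates in Bourgain spaces: after the normal form transformation the integrand gains the factor $|\Phi|^{-1}\sim (|k||k_1||k-k_1|)^{-1}$, which must compensate the derivative loss $|k|^{s_1-s}$ together with the two low-frequency factors $|k_1|^{-s}|k-k_1|^{-s}$. Since $s\ge 0$ here, the two input factors are harmless and the counting reduces to a one-dimensional convolution estimate. I would separate the resonant set $\{k_1(k-k_1)=0\}$ (handled directly, exploiting mean-zero to kill $k_1=0$ and $k=k_1$) from the nonresonant set, and on the latter carry out the standard Cauchy–Schwarz/$\ell^2$ summation in $(k,k_1,\tau,\tau_1)$ that Bourgain's method supplies.

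The main obstacle I anticipate is not the nonlinear estimate itself --- for $s\ge 0$ the derivative bookkeeping is comfortably within the $s_1<s+1$ threshold --- but rather propagating these fixed-time/local-in-time smoothing bounds to a global-in-time statement with the claimed polynomial rate $\langle t\rangle\,T(t)^9$. The difficulty is that the potential $\lambda$ is time dependent and breaks the conservation laws, so one cannot rely on an a priori global bound beyond the hypothesized growth $\|u(t)\|_{H^s}\le C\,T(t)$. The plan is therefore to run the bilinear estimates on unit (or $\|u\|_{H^s}$-dependent) time intervals, on each of which the $X^{s,b}$ norm is controlled by the $H^s$ norm at the left endpoint, and then iterate over $O(\langle t\rangle)$ such intervals. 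The power $T(t)^9$ should emerge from tracking how many factors of the $H^s$ norm appear: roughly, each local step costs a bounded number of powers of $\|u\|_{H^s}\lesssim T$, and summing the $H^{s_1}$ increments across the intervals (rather than multiplying them) yields the linear factor $\langle t\rangle$ in front. Keeping the accumulated constants uniform across the iteration, and verifying that the smooth-potential terms do not degrade the gain in $\Phi$, is where the technical care will be concentrated; the continuity in $t$ of $u-e^{tL}g$ then follows from the continuity of the $X^{s_1,b}$-controlled pieces in the usual way.
\end{proof}
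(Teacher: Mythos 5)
Your overall frame (one differentiation by parts plus Bourgain's restricted norm method, then a telescoping iteration over local intervals) is the paper's strategy, but there is a genuine gap at the core of your plan: after differentiation by parts the remainder is \emph{not} bilinear. When the time derivative falls on $u$ you must substitute the equation for $\partial_t u$, and this raises the degree of the nonlinearity from quadratic to cubic. The term you write as $R[u,u]$, with the estimate $\|R[u,u]\|_{X^{s_1,b-1}}\lesssim\|u\|_{X^{s,b}}^2$, is in fact the trilinear expression $\mathcal R(\lambda+2u,\lambda+u,u)$ of Proposition~\ref{thm:dbp}, and the estimate that must be proved is trilinear (Proposition~\ref{prop:nonlin}),
\begin{equation*}
\|\mathcal{R}(u,v,w) \|_{X^{s_1,-\frac12+\eps}_\delta}\lesssim \|u\|_{X^{s,1/2}_\delta} \|v\|_{X^{s,1/2}_\delta}\|w\|_{X^{s,1/2}_\delta},
\end{equation*}
which the paper proves by duality together with Bourgain's $L^6$ Strichartz estimate (Proposition~\ref{prop:B}) --- not the ``one-dimensional convolution'' bilinear counting you describe. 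Relatedly, the resonances you propose to kill by mean zero ($k_1=0$ or $k=k_1$) are only the quadratic-level ones; the genuine resonances appear at the trilinear level, on the set $(k_1+k_2)(k_2+k_3)(k_3+k_1)=0$, where there is no oscillation to exploit. The paper isolates these (the sets $S_1,S_2,S_3$ in the proof of Proposition~\ref{thm:dbp}) and shows that the worst terms cancel, leaving the resonant remainder $\rho_k$ whose leading piece $u_k|u_k|^2/k$ is controlled in $H^{s_1}$ only for $s_1<3s+1$ (Lemma~\ref{apriori}; harmless here since $s\geq 0$ gives $s+1\leq 3s+1$). Without recognizing the cubic structure your scheme never encounters this obstruction, nor the cancellation that removes it --- and the cancellation is exactly where the mean-zero hypothesis on $\lambda$ and the reality of $u$ enter.

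The second gap is the exponent bookkeeping. Iterating over ``$O(\langle t\rangle)$ unit intervals'' cannot produce $\langle t\rangle T(t)^9$. In the paper the exponent is forced by the local theory for the potential equation (Theorem~\ref{thm:poten}): the contraction closes only on intervals of length $\delta\approx \|u\|_{H^s}^{-6}$ (coming from the $\delta^{1/6}$ gain in Bourgain's bilinear estimate), so up to time $t$ there are $J\approx t\,T(t)^6$ intervals; on each, the trilinear estimate gives an increment $\lesssim \|u\|_{X^{s,1/2}_\delta}^3\lesssim T(t)^3$, and summing via the unitarity of $e^{rL}$ on $H^{s_1}$ (telescoping, increments add) yields $J\cdot T(t)^3\approx t\,T(t)^9$. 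With your interval count the powers of $T$ do not come out right, and the per-interval cost is cubic, not ``quadratic plus lower order.'' Finally, note that the paper does not need $Y^{s_1}$/$Z^{s_1}$ spaces for this step: since the reduced nonlinearity carries no derivative, the embedding $X^{s_1,b}\subset L^\infty_tH^{s_1}_x$ for $b>1/2$, together with the inhomogeneous estimate \eqref{lem:xsb}, suffices.
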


{\bf Remark 4. } We should note that the only reason that we are restricted to the range $s\geq 0$ in this theorem is the lack of global solutions at regularity levels below $L^2$.

{\bf Remark 5.} For $L^2$ initial data $g$, 
Theorem~\ref{theo:main} implies that $u-e^{tL}g\in C^0_tH^{1-}_x$, and hence is a continuous function of $x$ and $t$. 

Using this remark and the following theorem of Oskolkov, \cite{osk}, we obtain Corollary~\ref{cor:osk} below. We also note that using our theorem it is likely that other known properties of the Airy evolution could be extended to the KdV evolution.

\begin{theorem} \cite{osk} \label{oskolkov}
Let $L$ be as in the previous theorem and assume that $g$ is of bounded variation, then $e^{tL}g$ is a continuous function of $x$ if $t/2\pi$ is an irrational number. For rational values of $t/2\pi$, it is a bounded function with at most countably many discontinuities. 
Moreover, if $g$ is also continuous  then $e^{tL}g\in C^0_tC^0_x$.
\end{theorem}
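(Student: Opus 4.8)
The plan is to reduce the statement to a quantitative analysis of a single exponential sum, which I then study separately in the rational and irrational cases via Weyl-type estimates and Gauss sums. First I would record the Fourier representation: since $L=-\partial_x^3+\langle g\rangle\partial_x$ has constant coefficients,
\[
e^{tL}g(x)=\sum_{k}\widehat g(k)\,e^{i(kx+(k^3+\langle g\rangle k)t)}
=\sum_k \widehat g(k)\,e^{i(ky+k^3t)},\qquad y:=x+\langle g\rangle t,
\]
so the transport term $\langle g\rangle\partial_x$ only translates the solution and may be ignored. Because $g$ is of bounded variation, its distributional derivative $dg$ is a finite measure with $\widehat{dg}(k)=ik\,\widehat g(k)$ by periodic integration by parts, and $|\widehat{dg}(k)|\le \mathrm{Var}(g)$. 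Substituting $\widehat g(k)=\widehat{dg}(k)/(ik)$ and interchanging summation and integration leads to
\[
e^{tL}g(x)=\widehat g(0)+\frac{1}{2\pi}\int_{\mathbb T}\Phi_t(y-x')\,dg(x'),\qquad
\Phi_t(z):=\sum_{k\neq 0}\frac{e^{i(kz+k^3t)}}{ik}.
\]
Everything then hinges on the boundedness and continuity properties of the kernel $\Phi_t$ (which, after pairing $\pm k$, is the real generalized sawtooth $\sum_{k\ge1}\tfrac{2}{k}\sin(kz+k^3t)$).

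Next I would establish that $\Phi_t$ is uniformly bounded and, for irrational $t/2\pi$, that the symmetric partial sums converge uniformly in $z$. I would do this by summation by parts against the cubic Weyl sums $S_N(z)=\sum_{|k|\le N}e^{i(kz+k^3t)}$: the weight $1/k$ contributes the summable factor $|1/k-1/(k+1)|\sim k^{-2}$, so a bound $|S_N|\lesssim N^{1-\delta}$ yields absolute convergence. For irrational $t/2\pi$ the cubic phases $k^3t$ equidistribute modulo $2\pi$ and Weyl's inequality produces cancellation uniformly in the linear parameter $z$, giving uniform convergence and hence continuity of $\Phi_t$. Plugging this into the representation, $e^{tL}g$ becomes the integral of a bounded continuous kernel against the finite measure $dg$, and is therefore continuous in $x$.

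For rational $t/2\pi=p/q$ I would exploit the arithmetic periodicity: since $k^3\bmod q$ depends only on $k\bmod q$, the coefficient $e^{ik^3t}$ is constant on residue classes modulo $q$. Grouping the sum accordingly factors
\[
\Phi_t(z)=\sum_{r=0}^{q-1}e^{i r^3 t}\!\!\sum_{\substack{k\equiv r\,(q)\\ k\neq 0}}\frac{e^{ikz}}{ik},
\]
whose inner sums are dilated sawtooth functions, bounded with jumps only at the finitely many points of $\frac{2\pi}{q}\mathbb Z$. The Gauss-sum coefficients $e^{ir^3t}$ do not alter this structure, so $\Phi_t$ is bounded with finitely many jump discontinuities per period. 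Convolving this kernel against $dg$, whose atomic part is at most countable and each of whose atoms contributes finitely many jumps of $y\mapsto\Phi_t(y-x')$, produces a bounded function with at most countably many discontinuities. Finally, if $g$ is also continuous then $dg$ has no atoms, so the countably many jumps of $\Phi_t$ lie on $dg$-null sets and are averaged out; together with continuity of $\Phi_t$ in $t$ off its exceptional set and a dominated-convergence argument, this gives joint continuity, i.e.\ $e^{tL}g\in C^0_tC^0_x$.

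The main obstacle is the uniform-in-$z$ convergence of the partial sums of $\Phi_t$ in the irrational case: the cubic Weyl-sum bounds degrade as $t/2\pi$ approaches rationals, so for Liouville-type $t$ one cannot rely on a single power-saving estimate and must instead run the summation-by-parts argument through the equidistribution of $\{k^3t/2\pi\}$ while keeping the control uniform over all $z$ simultaneously. The rational case is by contrast fully explicit through Gauss sums, and the continuity upgrade for continuous $g$ is a soft consequence of the absence of atoms in $dg$.
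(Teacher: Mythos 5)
You should first note that the paper contains no proof of this statement at all: it is quoted verbatim as Oskolkov's theorem and cited to \cite{osk}, precisely because its proof is a substantial piece of analysis on Vinogradov-type series. So the comparison can only be between your sketch and what is actually needed. Your overall architecture is in fact the right one (and matches the known analysis): reduce to the kernel
\[
\Phi_t(z)=\sum_{k\neq 0}\frac{e^{i(kz+k^3t)}}{ik},
\]
represent $e^{tL}g$ as $\widehat g(0)+\frac{1}{2\pi}\int_{\mathbb T}\Phi_t(y-x')\,dg(x')$ using $g\in BV$, handle rational $t/2\pi$ by splitting into residue classes mod $q$ (where $\Phi_t$ is piecewise smooth with finitely many jumps at $\tfrac{2\pi}{q}\mathbb Z$), and transfer the kernel's properties to the convolution. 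The rational case and the "countably many discontinuities" conclusion are essentially fine, as is the reduction of the transport term to a translation.

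The genuine gap is the irrational case, which is the entire substance of Oskolkov's theorem, and your proposed mechanism for it fails quantitatively. Summation by parts converts the question into control of the Weyl sums $S_N(z)=\sum_{|k|\le N}e^{i(kz+k^3t)}$, and convergence of $\sum_k S_k(z)/k^2$ requires a bound that is \emph{summable against} $1/k^2$, e.g.\ $|S_N|\lesssim N^{1-\delta}$ or at least $N/(\log N)^{1+\delta}$, uniformly over all scales $N$. Weyl's inequality delivers a power saving only when $t/2\pi$ admits rational approximations $p/q$ with $q$ in the intermediate range $N^{\delta}\lesssim q\lesssim N^{3-\delta}$, i.e.\ for irrationals of finite type; for Liouville numbers there are arbitrarily long ranges of $N$ in which no such $q$ exists and the inequality gives nothing beyond the trivial bound. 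Your fallback — "run the summation-by-parts argument through the equidistribution of $\{k^3t/2\pi\}$" — cannot close this, because equidistribution only yields $|S_N|=o(N)$, and $\sum_k o(k)/k^2=\sum_k o(1/k)$ need not converge (take $|S_k|\sim k/\log k$). So for a general irrational $t/2\pi$ your argument establishes neither boundedness nor continuity of $\Phi_t$; handling \emph{all} irrationals requires the fine multi-scale analysis of rational approximations (continued fractions, Gauss/complete-sum estimates at every scale, in the spirit of the circle method and of Stein--Wainger bounds for discrete oscillatory sums) that constitutes Oskolkov's proof. The same deficiency undermines the final $C^0_tC^0_x$ claim, whose "dominated convergence" step presupposes exactly the uniform two-variable control of $\Phi_t$ that is missing.
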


\begin{corollary}\label{cor:osk}
Let $u$ be the real valued solution of \eqref{eq:potentialkdv} with  initial data $g \in BV\subset L^2$. Then,  $u$ is a continuous function of $x$ if $t/2\pi$ is an irrational number. For rational values of $t/2\pi$, it is a bounded function with at most countably many discontinuities. 
Moreover, if $g$ is also continuous  then $u\in C^0_tC^0_x$.
\end{corollary}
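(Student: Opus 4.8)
The plan is to exploit the decomposition $u(t)=e^{tL}g+\big(u(t)-e^{tL}g\big)$ and to argue that all of the regularity and discontinuity behavior of $u$ in the space variable is inherited from the linear part $e^{tL}g$, precisely because the nonlinear remainder $u(t)-e^{tL}g$ is a genuinely continuous function of both variables. Since $\mathbb T$ is compact, $BV(\mathbb T)\subset L^\infty(\mathbb T)\subset L^2(\mathbb T)$, so the datum $g$ falls within the scope of Theorem~\ref{theo:main} at regularity $s=0$. The required growth bound on $\|u(t)\|_{L^2}$ follows from a standard energy estimate for \eqref{eq:potentialkdv}: the smooth mean-zero potential $\lambda$ produces at most exponential growth (for $\lambda\equiv 0$ the $L^2$ norm is exactly conserved), so the hypotheses of that theorem are met.

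First I would invoke Theorem~\ref{theo:main} with $s=0$ and a fixed $s_1\in(1/2,1)$, which yields $w(t):=u(t)-e^{tL}g\in C^0_tH^{s_1}_x$. Because $s_1>1/2$, the one-dimensional Sobolev embedding $H^{s_1}(\mathbb T)\hookrightarrow C(\mathbb T)$ applies, and combined with the continuity of $t\mapsto w(t)$ in the $H^{s_1}$ norm this upgrades $w$ to a jointly continuous function of $x$ and $t$; in particular $w$ is bounded on $\mathbb T\times I$ for every compact time interval $I$. This is exactly the content already recorded in Remark~5.

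Next I would apply Oskolkov's Theorem~\ref{oskolkov} to $e^{tL}g$, which is legitimate since $g\in BV$ and $L$ is precisely the operator appearing there. Writing $u(t)=e^{tL}g+w(t)$ and adding the continuous function $w$ preserves each of the three assertions: for $t/2\pi$ irrational, $e^{tL}g$ is continuous in $x$ and hence so is $u$; for $t/2\pi$ rational, $e^{tL}g$ is bounded with at most countably many discontinuities in $x$, and adding the continuous, hence bounded, function $w$ leaves the set of discontinuities and the boundedness unchanged; and if $g$ is in addition continuous then $e^{tL}g\in C^0_tC^0_x$, so $u=e^{tL}g+w\in C^0_tC^0_x$.

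There is no genuine analytic obstacle here, as the corollary is a structural consequence of the two quoted theorems. The only points demanding care are checking that the exponent range $s_1<s+1=1$ of Theorem~\ref{theo:main} indeed reaches above the embedding threshold $1/2$ (it does, comfortably), that the remainder's continuity in $t$ combines with its continuity in $x$ to give joint continuity, and that adding a jointly continuous function genuinely commutes with the discontinuity bookkeeping in the rational case, i.e.\ that the countable discontinuity set and the boundedness are undisturbed by a continuous perturbation.
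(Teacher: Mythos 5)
Your proposal is correct and follows essentially the same route as the paper: the corollary is obtained there by combining Remark~5 (Theorem~\ref{theo:main} with $s=0$, giving $u-e^{tL}g\in C^0_tH^{1-}_x\subset C^0_tC^0_x$ via Sobolev embedding) with Oskolkov's Theorem~\ref{oskolkov} applied to the linear part $e^{tL}g$. Your additional checks (the growth hypothesis via the energy/Gronwall estimate of Theorem~\ref{growth}, and that adding a jointly continuous function preserves the discontinuity structure) are exactly the points the paper leaves implicit.
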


{\bf Remark 6.} Note that if $g\in H^s$ for some $s>1/2$, then $u\in C^0_tH^s_x \subset C^0_tC^0_x$ since KdV is globally well-posed. Oskolkov's theorem and smoothing allow us to have the same conclusion with initial data $g \in BV \cap C^0 \subset \bigcap_{s<1/2} H^s$.

\vspace{2mm}

The smoothing properties that we describe for  periodic KdV are in a way unexpected. The problem of smoothing for  KdV seems to be quite hard even if one poses the equation on the real line and as such has a long history. If we denote by $X^s$ and $Y^s$ the local theory spaces of the introduction at a regularity level that accommodates $s$ derivatives, a smoothing estimate for the KdV will read as
$$\|\partial_{x}(u^2)\|_{Y^{s_1}}\leq C \|u\|_{X^s}^2$$
with $s_1>s$. Unfortunately even for the real line this estimate fails as it was proved in \cite{cst}. In \cite{cst}, the autors were aiming at the existence of global-in-time solutions evolving from rougher than $L^2$ data. In the absence of a smoothing estimate they managed instead to prove a modified version of the estimate in the case that the functions in the nonlinear estimate had spatial Fourier transform supported away from zero. The failure of the estimate for wave interactions with low frequencies forced them to consider data in a space $H^{0,\alpha}$ for some $\alpha<0$ which suppresses low frequencies. This result didn't improve Bourgain's result  since this space fails to contain  $L^2$ for $\alpha<0$.  On the other hand, Colliander, Delort, Kenig and Stafillani in \cite{cdks}, obtained  bilinear smoothing estimates of the above form for NLS on $\mathbb R^2$ but with special nonlinearities. They used these estimates to obtain local well-posedness results for certain bilinear Schr\"odinger equations and establish polynomial in time bounds for the higher order Sobolev norms of nonintegrable NLS equations. 
  
For the periodic problem the first bilinear smoothing estimate that we know of is due to Kenig, Ponce and Vega, in \cite{kpv2}, for the 1d NLS on the torus and with the nonlinearity of the form $u^2$. In both cases (\cite{cdks}, \cite{kpv2}) the nonlinearities are not physical   (the equations do not conserve the $L^2$ norm), and they are useful in conjunction with other tools. We on the other hand prove a smoothing trilinear estimate for the full KdV nonlinearity after the normal form reduction. It is easy  to see that with the $uu_x$ nonlinearity the bilinear smoothing fails on the torus.  In \cite {chr},   Christ obtained a (local-in-time) smoothing estimate in the $\mathcal{FL}^p\to \mathcal{FL}^q$ setting for 1d cubic NLS. Thus, although the dynamics of periodic dispersive equations is a well studied subject, it appears that our result is the first smoothing estimate in Sobolev spaces for a physical partial differential equation.

We say a few words about the method of the proof. Following the argument in \cite{bit} and using a normal form reduction we first rewrite the equation in an equivalent but easier to deal with form. In this particular form the derivative in the nonlinearity is eliminated. The penalty one pays after such a reduction is  to increase the order of   the nonlinearity (in KdV from quadratic to cubic)  and to obtain resonant terms. Due to the absence of the zero Fourier modes (momentum conservation) the bilinear nonlinearity has no resonant terms. In addition we avoid the low frequency interactions that the authors in \cite{cst} faced. To estimate the new tri-linear term we now   decompose the nonlinearity into resonant and nonresonant terms.  It should be noted that   in the resonant terms the waves interact with no oscillation and hence they are always ``the enemy''. Any method to estimate nonlinear interactions of dispersive equations reaches its limits exactly due to these terms (see Remark 2 above). But it turns out that the nonsmooth resonant terms of the KdV cancel out and the gain of the derivative is more than enough to compensate for the remaining nonlinear terms. What is  more striking is the fact that the nonsmooth resonant terms cancel out even for the KdV equation with a potential. For the nonresonant terms, we apply the restricted norm method of Bourgain to the reduced nonlinearity to prove the smoothing.

\subsection{Notation}
To avoid the use of multiple constants, we  write $A \lesssim B$ to denote that there is an absolute  constant $C$ such that $A\leq CB$.  We also write $A\approx B$ to denote both $A\lesssim B$ and $B \lesssim A$. We define $\langle \cdot\rangle =1+|\cdot|$. We also reserve $\langle g\rangle$ notation for the avarage
of a $2\pi$-periodic function $g$.

We define the Fourier sequence of a $2\pi$-periodic $L^2$ function $u$ as
$$u_k=\frac1{2\pi}\int_0^{2\pi} u(x) e^{-ikx} dx, \,\,\,k\in \mathbb Z.$$
With this normalization we have
$$u(x)=\sum_ke^{ikx}u_k,\,\,\text{ and } (uv)_k=u_k*v_k=\sum_{m+n=k} u_nv_m.$$

As usual, for $s<0$, $H^{s}$ is the completion of $L^2$ under the norm 
$$\|u\|_{H^{s}}=\|\widehat u(k) (1+|k|^2)^{s/2}\|_{\ell^2}.$$ 
Note that for a mean-zero $L^2$ function $u$, $\|u\|_{H^{s}}\approx \|\widehat u(k) |k|^{s}\|_{\ell^2}$.
For a sequence $u_k$, with $u_0=0$, we will use $\|u\|_{H^{s}}$ notation to denote $\|u_k |k|^s\|_{\ell^2}$.

\vspace{.4cm}
\noindent
{\bf Acknowledgments.}
We thank J.~Bourgain, C.~E.~Kenig and V.~Zharnitsky for useful discussions. We also thank K.~Oskolkov for pointing out his result (Theorem~\ref{oskolkov}) to us.

\section{Some Results on KdV Evolution}
In this section we define the   spaces that are commonly used in the theory of periodic KdV, and list the results that our proof relies on.

We define the $X^{s,b}$ spaces for $2\pi$-periodic KdV via the norm
$$
\|u\|_{X^{s,b}}=\|\langle k\rangle^s \langle \tau-k^3\rangle^b \widehat u(k,\tau)\|_{L^2(dkd\tau)}.
$$
We also define the restricted norm
$$
\|u\|_{X^{s,b}_\delta}=\inf_{ \tilde u=u \text{ on } [-\delta,\delta]}\|\tilde u\|_{X^{s,b}}.
$$
The local well-posedness theory for periodic KdV was established in the space $X^{s,1/2}$.
Unfortunately, this space fails to control the $L^\infty_t H^s_x$ norm of the solution. To remedy this problem and ensure the continuity of KdV flow, the $Y^s$ and $Z^s$ spaces are defined in \cite{GTV} and \cite{ckstt}, based on the ideas of Bourgain, in  \cite{Bou2}, via the norms
$$
\|u\|_{Y^s}=\|u\|_{X^{s,1/2}}+ \| \langle k\rangle^s\widehat u(k,\tau) \|_{L^2(dk)L^1(d\tau)},
$$
$$
\|u\|_{Z^s}=\|u\|_{X^{s,-1/2}}+\Big\|\frac{\langle k\rangle^s\widehat u(k,\tau)}{\langle \tau-k^3\rangle}\Big\|_{L^2(dk)L^1(d\tau)}.
$$
One defines $Y^s_\delta$, $Z^s_\delta$ accordingly.
Note that if $u\in Y^s$ then $u\in L^\infty_t H^s_x$.

We use the following theorems from \cite{ckstt}:
\begin{theorem}\cite{ckstt} \label{thm:I1}
For any $s\geq -1/2$, the initial value problem \eqref{eq:kdv} is locally well-posed in $H^s$. In particular, $\exists \delta\approx \|g\|_{H^s}^{-3}$ such that there exists a unique solution
$$u\in C([-\delta,\delta];H^s_x(\mathbb T))\cap Y^s_\delta$$
with
$$\|u\|_{X^{s,1/2}_\delta}\leq \|u\|_{Y^s_\delta}\leq C \|g\|_{H^s}.$$
\end{theorem}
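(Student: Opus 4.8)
The plan is to run Bourgain's restricted norm (Fourier restriction) method as a contraction mapping in the space $Y^s_\delta$. Writing \eqref{eq:kdv} in Duhamel form (for mean-zero data $\langle g\rangle=0$, so that $L=-\partial_x^3$ is the Airy generator),
$$u(t)=e^{tL}g-\int_0^t e^{(t-t')L}\partial_x(u^2)(t')\,dt',$$
I would, after multiplying by a smooth temporal cutoff $\psi$ adapted to $[-\delta,\delta]$, study the map
$$\Phi(u)(t)=\psi(t)\,e^{tL}g-\psi(t)\int_0^t e^{(t-t')L}\partial_x(u^2)(t')\,dt'.$$
The argument rests on three estimates. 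First, the homogeneous linear estimate $\|\psi\,e^{tL}g\|_{Y^s}\lesssim\|g\|_{H^s}$. Second, the inhomogeneous estimate controlling the Duhamel term in $Y^s_\delta$ by $\|\partial_x(u^2)\|_{Z^s_\delta}$; the companion spaces $Y^s$ and $Z^s$ are introduced exactly here, since $X^{s,1/2}$ by itself does not control the $L^\infty_tH^s_x$ norm. Third, and centrally, the bilinear estimate
$$\|\partial_x(uv)\|_{Z^s_\delta}\lesssim\|u\|_{Y^s_\delta}\|v\|_{Y^s_\delta}.$$

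The bilinear estimate is the heart of the matter and is where dispersion enters. On the Fourier side the product couples frequencies $k=k_1+k_2$ with $k,k_1,k_2\neq0$, and the key is the resonance identity
$$k^3-k_1^3-k_2^3=3kk_1k_2,$$
so that at least one of the modulation weights $\langle\tau-k^3\rangle$, $\langle\tau_1-k_1^3\rangle$, $\langle\tau_2-k_2^3\rangle$ is comparable to $|kk_1k_2|$. I would split the frequency domain according to which modulation is largest and, in each region, use the corresponding gain to absorb the derivative $\partial_x$ (which costs $|k|$) together with the Sobolev weights, reducing matters to $L^2$-based Cauchy--Schwarz and counting estimates for the remaining sum. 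The absence of a zero frequency (momentum conservation) keeps the denominator $3kk_1k_2$ from degenerating and removes the low-frequency obstruction that defeats the analogous smoothing estimates on the real line.

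With these estimates in hand, $\Phi$ maps a ball of $Y^s_\delta$ into itself and is a contraction there once $\delta$ is taken small relative to $\|g\|_{H^s}$; tracking this dependence through the quadratic nonlinearity yields the stated scaling $\delta\approx\|g\|_{H^s}^{-3}$. The fixed point is the desired solution, while uniqueness in $Y^s_\delta$ and Lipschitz dependence on the data follow from the same contraction estimate applied to differences. Finally, the embedding $Y^s\hookrightarrow C^0_tH^s_x$, supplied by the auxiliary $L^2(dk)L^1(d\tau)$ component of the $Y^s$ norm, gives the continuity in time.

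The main obstacle is the bilinear estimate at the endpoint $s=-1/2$, and within it the high--high to low interaction in which two large frequencies $k_1\approx-k_2$ produce a small output frequency $k$. There the denominator $3kk_1k_2$ supplies only a weak gain and the plain $X^{s,1/2}$ estimate fails by a logarithm; this is precisely why the refined $Y^s$ and $Z^s$ norms are needed, their $L^2(dk)L^1(d\tau)$ pieces being tailored to recover the borderline case and close the argument uniformly down to $s=-1/2$.
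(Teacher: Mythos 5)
This theorem is not proved in the paper at all: it is quoted from \cite{ckstt}, so the only meaningful comparison is with the proof in the cited literature (Bourgain \cite{Bou2}, Kenig--Ponce--Vega \cite{kpv}, and \cite{ckstt} for the endpoint). For $s>-1/2$ your outline reproduces the architecture of that proof faithfully: Duhamel formula with a temporal cutoff, contraction in $Y^s_\delta$, the three estimates (linear, the inhomogeneous $Y^s$--$Z^s$ estimate, and the bilinear estimate $\|\partial_x(uv)\|_{Z^s}\lesssim\|u\|_{Y^s}\|v\|_{Y^s}$), the resonance identity $k^3-k_1^3-k_2^3=3kk_1k_2$ exploited by a largest-modulation case analysis, and the mean-zero reduction that prevents the factor $kk_1k_2$ from vanishing.

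There is, however, a genuine gap at the endpoint $s=-1/2$, which the theorem includes and which was precisely the new contribution of \cite{ckstt} (the paper's introduction records that \cite{kpv} covered only $s>-1/2$). Your plan closes the contraction ``once $\delta$ is taken small relative to $\|g\|_{H^s}$,'' which presupposes a factor $\delta^\theta$, $\theta>0$, in the nonlinear estimate; such a factor is obtained by raising the modulation exponent on the left-hand side of the bilinear estimate above $-1/2$, and at $s=-1/2$ the Kenig--Ponce--Vega counterexamples show there is no room to do this. Hence no power of $\delta$ is available at the endpoint and the large-data contraction cannot be closed by shrinking $\delta$. The proof in \cite{ckstt} instead rescales the torus to period $2\pi\lambda$, choosing $\lambda$ so that the rescaled data is small (which requires re-proving the linear, Duhamel, and bilinear estimates in $\lambda$-dependent spaces with constants uniform in $\lambda$), runs a small-data contraction on a time interval of order one, and undoes the scaling. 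Since time rescales by $\lambda^3$ and $\lambda\approx\|g\|_{H^{-1/2}}$ at the endpoint, this scaling argument is also where the stated lifespan $\delta\approx\|g\|_{H^s}^{-3}$ actually comes from, not from ``tracking the dependence through the quadratic nonlinearity.'' Relatedly, your last paragraph misattributes the role of the $Y^s$/$Z^s$ spaces: their $L^2(dk)L^1(d\tau)$ components are needed for every $s\ge-1/2$, because $X^{s,1/2}$ fails to embed in $C^0_tH^s_x$ and $X^{s,-1/2}$ alone does not control the Duhamel term; they are not a device specific to a borderline failure at $s=-1/2$, and they do not by themselves repair the endpoint contraction.
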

\begin{theorem}\cite{ckstt}\label{thm:I2}
For any $s\geq -1/2$, the initial value problem \eqref{eq:kdv} is globally well-posed in $H^s$. Moreover, for $-1/2\leq s < 0$,
$$\|u\|_{H^s}\leq C (1+|t|)^{-s+}$$
where $C$ depends on $\|g\|_{H^s}$.
\end{theorem}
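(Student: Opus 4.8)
The plan is to prove this by the \emph{I-method} of almost conserved quantities, taking $L^2$ conservation as the point of departure. Fix $-1/2\le s<0$; the range $s\ge 0$ follows directly from $L^2$ conservation together with Theorem~\ref{thm:I1} (and higher conservation laws for $s\ge 1$). For a large parameter $N$ introduce the smoothing Fourier multiplier $I=I_N$ with symbol $m(k)$ equal to $1$ for $|k|\le N$ and to $(|k|/N)^{s}$ for $|k|\ge 2N$, smooth and monotone in between. A one-line computation gives $\|u\|_{H^s}\lesssim \|Iu\|_{L^2}\lesssim N^{-s}\|u\|_{H^s}$, so $\|Iu\|_{L^2}$ is a renormalized, nearly conserved surrogate for the (infinite) $L^2$ norm of the rough data. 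As a first step one establishes a modified local theory: running the $X^{0,1/2}$ bilinear estimate of Bourgain/Kenig--Ponce--Vega on $Iu$ in place of $u$ yields well-posedness on an interval $[0,\delta]$ with $\delta\approx\|Iu_0\|_{L^2}^{-\gamma}$ for some fixed $\gamma>0$, together with $\|Iu\|_{X^{0,1/2}_\delta}\lesssim\|Iu_0\|_{L^2}$, the bilinear estimate for $I(uv)$ being deduced from the classical one using the monotonicity of $m$.

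The heart of the argument is the almost conservation law. Since the $L^2$ norm of an exact solution is conserved and the dispersive term is skew-adjoint, differentiating and using $\langle Iu,\partial_x(Iu)^2\rangle=0$ gives
\[
\frac{d}{dt}\|Iu\|_{L^2}^2=-2\big\langle Iu,\ \partial_x\big[I(u^2)-(Iu)^2\big]\big\rangle .
\]
The bracket measures exactly the failure of $I$ to commute with the nonlinearity; on the Fourier side it carries the symbol $1-m(k_1+k_2)/(m(k_1)m(k_2))$, which vanishes at low frequencies and is small otherwise. Integrating over $[0,\delta]$ and estimating the resulting trilinear form in $X^{0,1/2}_\delta$ (via the local bound of the first step) produces a genuine gain
\[
\Big|\,\|Iu(\delta)\|_{L^2}^2-\|Iu(0)\|_{L^2}^2\,\Big|\lesssim N^{-\beta}\|Iu_0\|_{L^2}^{3}
\]
for some $\beta>0$.

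One then iterates. Writing $A:=\|Iu_0\|_{L^2}\approx N^{-s}\|g\|_{H^s}$, each step of length $\delta\approx A^{-\gamma}$ costs at most $N^{-\beta}A^3$ in $\|Iu\|_{L^2}^2$, so after $M$ steps the surrogate norm stays below $2A$ as long as $M\,N^{-\beta}A\lesssim 1$; this reaches time $T\approx M\delta\approx N^{\beta}A^{-1-\gamma}$. Given any target $T$, choosing $N=N(T)$ to satisfy this relation (solved self-consistently, since $A$ itself depends on $N$) yields global existence for all $s\ge -1/2$, and, through $\|u(T)\|_{H^s}\lesssim\|Iu(T)\|_{L^2}\le 2A\approx N^{-s}\|g\|_{H^s}$ with $N$ polynomial in $T$, the stated bound $\|u\|_{H^s}\lesssim(1+|t|)^{-s+}$, the $+$ absorbing the slack in the self-consistent choice of $N$.

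I expect the main obstacle to be the almost conservation estimate, namely establishing the $N^{-\beta}$ gain for the trilinear commutator form. This requires a careful frequency case analysis that extracts the dispersive oscillation through the $X^{s,b}$ calculus, and, crucially in the periodic setting, control of the resonant interactions $k_1+k_2+k_3=0$, where no oscillatory gain is available; here one exploits the mean-zero hypothesis and the cancellation produced by symmetrizing the multiplier $m$. Optimizing $\beta$ is precisely what permits the endpoint $s=-1/2$ and pins down the sharp growth exponent $-s+$.
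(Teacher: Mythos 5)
You should first be aware that the paper contains no proof of this statement: it is imported verbatim from \cite{ckstt}, and the introduction only summarizes the strategy of that proof (the I-method of almost conserved quantities, iterated over local intervals to give polynomial bounds). Your outline is indeed that strategy, so at the level of approach you are aligned with the cited proof. However, the proposal as written has two genuine gaps, and they sit exactly where the claimed range $s\geq -1/2$ and the exponent $-s+$ are decided.

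First, the ``first generation'' almost conservation law you propose --- bounding the commutator form $\langle Iu,\partial_x[I(u^2)-(Iu)^2]\rangle$ directly by a trilinear $X^{0,1/2}$ estimate --- does not yield a decay exponent $\beta$ anywhere near large enough: on the line this argument gives global well-posedness only for $s>-3/10$, and it certainly does not reach $-1/2$ on the torus. What \cite{ckstt} actually do is add a quadrilinear correction term to $\|Iu\|_{L^2}^2$ (a second-generation modified energy), chosen so that its time derivative cancels the worst near-resonant part of the trilinear form; the object one then estimates is the resulting quintilinear expression, and the endpoint $s=-1/2$ requires additional care on top of that. So the step you flag as ``the main obstacle'' cannot be overcome by a more careful case analysis of the trilinear form alone --- a different energy functional is needed. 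Second, \cite{ckstt} do not iterate on the fixed torus with $\|Iu_0\|_{L^2}\approx N^{-s}\|g\|_{H^s}$ large, as you do; they rescale to a $\lambda$-periodic torus so that $\|Iu_0\|_{L^2}=O(1)$, which forces them to rebuild the $X^{s,b}$ local theory and all multilinear estimates with constants uniform in $\lambda$ (frequencies now lie in $\mathbb{Z}/\lambda$, which changes the resonance analysis). Your fixed-torus bookkeeping closes only if $\beta>|s|(1+\gamma)$, where $\gamma$ is the local-theory exponent; this condition is most stringent precisely at $s=-1/2$, and the assertion that the ``$+$'' in $(1+|t|)^{-s+}$ ``absorbs the slack'' presupposes the favorable accounting that only the rescaled, corrected-energy argument provides. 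Minor by comparison: for $0<s<1$ you invoke $L^2$ conservation plus Theorem~\ref{thm:I1}, but the local time in that theorem depends on $\|g\|_{H^s}$, so one also needs the standard persistence-of-regularity refinement (local existence time controlled by the $L^2$ norm alone) to conclude.
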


The next theorem concerns  KdV with potential.

\begin{theorem}\label{thm:poten}
For any $s\geq 0$, the initial value problem \eqref{eq:potentialkdv} is locally well-posed in $H^s$. In particular, $\exists \delta\approx \|g\|_{H^s}^{-6}$ such that there exists a unique solution
$$u\in C([-\delta,\delta];H^s_x(\mathbb T))\cap Y^s_\delta$$
with
$$\|u\|_{X^{s,1/2}_\delta}\leq \|u\|_{Y^s_\delta}\leq C \|g\|_{H^s}.$$
\end{theorem}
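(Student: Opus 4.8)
The plan is to establish Theorem~\ref{thm:poten} by a contraction mapping argument in the restricted space $Y^s_\delta$, running the scheme behind Theorem~\ref{thm:I1} for pure KdV and treating the extra potential term $\partial_x(\lambda u)$ as a perturbation. Using momentum conservation to fix the mean of the solution at $\langle g\rangle$ and writing $L=-\partial_x^3+\langle g\rangle\partial_x$, I would reformulate \eqref{eq:potentialkdv} in Duhamel form with a smooth time cutoff $\eta$ supported near $[-1,1]$,
$$u(t)=\eta(t/\delta)e^{tL}g-\eta(t/\delta)\int_0^t e^{(t-t')L}\partial_x(u^2+\lambda u)(t')\,dt',$$
and show that the resulting map $\Phi$ is a contraction on a ball of radius $\approx\|g\|_{H^s}$ in $Y^s_\delta$.

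The first ingredients are the standard linear estimates in the $Y^s/Z^s$ framework of \cite{GTV,ckstt}: the homogeneous bound $\|\eta(t/\delta)e^{tL}g\|_{Y^s_\delta}\lesssim\|g\|_{H^s}$ and the Duhamel bound $\|\eta(t/\delta)\int_0^t e^{(t-t')L}F\,dt'\|_{Y^s_\delta}\lesssim\|F\|_{Z^s_\delta}$. For the genuinely nonlinear part I would invoke the bilinear estimate underlying Theorem~\ref{thm:I1}, in the form $\|\partial_x(uv)\|_{Z^s_\delta}\lesssim\delta^{1/6}\|u\|_{Y^s_\delta}\|v\|_{Y^s_\delta}$ for $s\ge0$; the positive power of $\delta$ is exactly what produces the $\|g\|_{H^s}$-dependence of the existence time.

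The new point, and the main obstacle, is the potential term, for which I must prove $\|\partial_x(\lambda u)\|_{Z^s_\delta}\lesssim\delta^{1/6}C(\lambda)\|u\|_{Y^s_\delta}$. The difficulty is the spatial derivative falling on $u$: multiplication by $\lambda$ is merely bounded on $H^s$ and gains no derivative, so the factor $|k|$ from $\partial_x$ must be absorbed by dispersive smoothing. Here two hypotheses on $\lambda$ are decisive. First, $\lambda\in C^\infty$ makes $\widehat\lambda(k-k',\tau-\tau')$ decay faster than any polynomial, so the interaction is essentially diagonal, with $|k|\approx|k'|$ and $|\tau-\tau'|\lesssim1$. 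Second, $\lambda$ is mean-zero, so $\widehat\lambda(0,\cdot)\equiv0$ and hence $k\ne k'$ in every frequency interaction; together with $|k-k'|\ge1$ this forces the large resonance $|k^3-k'^3|\gtrsim\min(k^2,k'^2)$. Consequently either the output modulation $\langle\tau-k^3\rangle$ or the input modulation $\langle\tau'-k'^3\rangle$ is $\gtrsim k^2$, and in either case the associated weight $\langle\cdot\rangle^{-1/2}$ or $\langle\cdot\rangle^{1/2}$ pays for the full factor $|k|$. This is the mechanism that renders the derivative harmless, and it is precisely where the smoothness and mean-zero assumptions enter.

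Combining the three estimates gives $\|\Phi(u)\|_{Y^s_\delta}\lesssim\|g\|_{H^s}+\delta^{1/6}\|u\|_{Y^s_\delta}^2+\delta^{1/6}C(\lambda)\|u\|_{Y^s_\delta}$, together with the analogous Lipschitz bound for $\Phi(u)-\Phi(v)$. Choosing the radius $\approx\|g\|_{H^s}$ and $\delta\approx\|g\|_{H^s}^{-6}$, so that $\delta^{1/6}\|g\|_{H^s}\lesssim1$ controls both nonlinear contributions, makes $\Phi$ a contraction and yields a unique fixed point with $\|u\|_{Y^s_\delta}\le C\|g\|_{H^s}$; the inequality $\|u\|_{X^{s,1/2}_\delta}\le\|u\|_{Y^s_\delta}$ is immediate from the definition of the $Y^s$ norm, and the embedding $Y^s\hookrightarrow C^0_tH^s_x$ furnishes $u\in C([-\delta,\delta];H^s_x)$ together with continuity with respect to the data. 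Everything except the potential estimate of the previous paragraph is the standard Bourgain/CKSTT local theory.
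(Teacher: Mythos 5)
Your proposal is correct, and its skeleton is exactly the argument the paper intends: the paper gives no detailed proof of Theorem~\ref{thm:poten}, deferring to \cite{Bou2} and \cite{sta}, and the only two quantitative points it records --- the Duhamel/bilinear estimate with the $\delta^{1/6}$ gain coming from Bourgain's $L^4$ Strichartz estimate, and the choice $\delta\approx \|g\|_{H^s}^{-6}$ forced by the loss of scale invariance --- are precisely the pivots of your contraction in $Y^s_\delta$. The one place where you genuinely depart from the route the paper points to is the potential term. Following \cite{sta}, one disposes of $\partial_x(\lambda u)$ by feeding $\lambda$ into the \emph{same} bilinear estimate used for $\partial_x(u^2)$: after time localization, a smooth mean-zero potential lies in $X^{\sigma,b}_\delta$ for every $\sigma$ and $b$, so $\big\|\psi_\delta(t)\int_0^t e^{L(t-r)}\partial_x(\lambda u)(r)\,dr\big\|_{Y^s}\lesssim \delta^{1/6}\|\lambda\|_{X^{s,1/2}_\delta}\|u\|_{X^{s,1/2}_\delta}$ comes for free; the mean-zero hypothesis is needed on this route as well, since Bourgain's bilinear estimate requires both factors to have vanishing zero mode. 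You instead prove a dedicated linear-in-$u$ estimate from scratch, using rapid decay of $\widehat\lambda$ to reduce to near-diagonal interactions and the resonance identity $|k^3-k'^3|\gtrsim \max(k^2,k'^2)$ for $k\neq k'$ to absorb the derivative. Both mechanisms are sound: yours is self-contained, makes transparent exactly why smoothness and mean-zero of $\lambda$ neutralize the derivative, and would tolerate much rougher potentials; the paper's recycling of the bilinear machinery is shorter, requires no new Fourier analysis, and silently handles the bookkeeping (the $L^2_kL^1_\tau$ component of the $Z^s$ norm, the $\delta$-dependence hidden in the time cutoff of $\lambda$) that your sketch would still have to carry out by hand. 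Either way the contraction closes identically, so your write-up is a faithful, somewhat more detailed reconstruction of the intended proof.
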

The proof of this theorem follows the arguments in \cite{Bou2} and \cite{sta} which won't be repeated here. Notice that $\delta$ is different in this theorem since the equation is not scale invariant anymore. The dependence of $\delta$ on the size of the initial data given in the theorem is due to the following reason.
Bourgain, in \cite{Bou2}, proved that
$$\Big\|\psi_\delta(t)\int_0^t e^{L(t-r)} \partial_x(u v)(r) dr \Big\|_{Y^s}\lesssim \delta^{1/6}\|u\|_{X^{s,1/2}_\delta}\|v\|_{X^{s,1/2}_\delta}.
$$
Here $\psi_\delta(t):=\psi(t/\delta)$, where $\psi \in C^\infty$ and supported on $[-2,2]$, and $\psi(t)=1$ on $[-1,1]$.  
The gain in $\delta$ comes from the $L^4_{t,x}$ Strichartz estimate that he obtained. Using this inequality one has to pick $\delta\approx \|g\|_{H^s}^{-6}$ to close the contraction.

Now we show how one can extend the local solutions of   \eqref{eq:potentialkdv}  to   global ones.

\begin{theorem}\label{growth} The initial value problem \eqref{eq:potentialkdv} is globally well-posed in $L^2$. In particular, any $H^s$ norm for $s\geq 0$ grows at most exponentialy. Moreover, if $\partial_x\lambda\in L^1_tL^\infty_x$, then the $L^2$ norm remains bounded and any $H^s$ norm for $s>0$ grows at most polynomially. 
\end{theorem}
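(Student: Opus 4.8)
The plan is to control the growth of Sobolev norms by an a priori energy identity for the $L^2$ norm, and by an almost-conserved energy increment for the higher norms, feeding both into the local theory of Theorem~\ref{thm:poten} to globalize.

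First I would establish the $L^2$ bound directly. Multiplying \eqref{eq:potentialkdv} by $u$ and integrating over $\mathbb T$, the dispersive term $\int u u_{xxx}\,dx$ and the nonlinear term $\int u(u^2)_x\,dx=\tfrac23\int(u^3)_x\,dx$ both vanish by periodicity, while integration by parts in the potential term gives $\int u(\lambda u)_x\,dx=\tfrac12\int\lambda_x u^2\,dx$. Hence
\[
\frac{d}{dt}\|u(t)\|_{L^2}^2=-\int_{\mathbb T}\lambda_x(x,t)\,u^2(x,t)\,dx ,
\]
so that $\big|\tfrac{d}{dt}\|u\|_{L^2}^2\big|\le\|\lambda_x(t)\|_{L^\infty_x}\|u(t)\|_{L^2}^2$. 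Gronwall's inequality yields $\|u(t)\|_{L^2}^2\le\|g\|_{L^2}^2\exp\!\big(\int_0^t\|\lambda_x(r)\|_{L^\infty_x}\,dr\big)$, which grows at most exponentially when $\lambda$ merely has bounded derivatives and stays uniformly bounded when $\partial_x\lambda\in L^1_tL^\infty_x$. Since the local existence time in Theorem~\ref{thm:poten} (taken at $s=0$) depends only on $\|u\|_{L^2}$, this a priori bound rules out finite-time blow-up and, by the standard continuation argument, gives global well-posedness in $L^2$.

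For the higher norms ($s>0$) I would argue by persistence of regularity together with an energy-increment estimate over local intervals. On a step $[t_0,t_0+\delta]$ with $\delta\approx\|u(t_0)\|_{L^2}^{-6}$ I write $u(t_0+\delta)=e^{\delta L}u(t_0)+w$ with $w$ the Duhamel term. Because $e^{tL}$ acts as a unimodular Fourier multiplier it is an isometry on every $H^s$, so the increment $\|u(t_0+\delta)\|_{H^s}^2-\|u(t_0)\|_{H^s}^2$ comes entirely from $w$ and can be written as a space-time trilinear form in $u$ (from $\partial_x(u^2)$) plus a bilinear form involving $\lambda$ (from $\partial_x(\lambda u)$). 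The potential contribution is harmless: since $\lambda$ is smooth with bounded derivatives, a commutator estimate bounds it by $C(\lambda)\|u(t_0)\|_{H^s}^2$. The trilinear KdV contribution is estimated by integrating in time and invoking the local bound $\|u\|_{X^{s,1/2}_\delta}\lesssim\|u(t_0)\|_{H^s}$, keeping one factor at the $L^2$ level $\|u\|_{X^{0,1/2}_\delta}\lesssim\|u(t_0)\|_{L^2}$ so as to exploit the controlled low norm. Summing the increments over the $\approx t/\delta$ steps needed to reach time $t$ and using the $L^2$ bound from the first step produces the exponential bound for general $\lambda$ and, when $\partial_x\lambda\in L^1_tL^\infty_x$ and the $L^2$ norm is therefore bounded, the polynomial bound; this parallels the almost-conservation-law globalization behind Theorem~\ref{thm:I2}.

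The main obstacle is the trilinear energy-increment estimate itself. Differentiating $\|u\|_{H^s}^2$ produces, after symmetrization on $k_1+k_2+k_3=0$, the multiplier $k_1|k_1|^{2s}+k_2|k_2|^{2s}+k_3|k_3|^{2s}$, which vanishes when $s=0$ (so that only the potential term survives, as in the $L^2$ identity above) but is nonzero for $s>0$. In the non-resonant regime the frequency constraint forces this multiplier to be much smaller than the naive $|k_{\max}|^{2s+1}$, and the dispersion, manifested through the resonance relation $k_1^3+k_2^3+k_3^3=3k_1k_2k_3$, supplies the decay needed to sum. The genuinely dangerous terms are the resonant high-high-high interactions with all frequencies comparable, which a naive differential inequality cannot control for $s\le1$; here one must use the same dispersive smoothing and cancellation of nonsmooth resonant terms that drive the estimates elsewhere in the paper. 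Making this quantitative with a gain that renders the per-step increment sublinear in $\|u\|_{H^s}^2$ is exactly what upgrades the crude multiplicative (hence exponential) bound to the polynomial bound in the case of bounded $L^2$ norm.
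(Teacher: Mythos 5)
Your first step is exactly the paper's argument and is complete: multiplying \eqref{eq:potentialkdv} by $u$ and integrating kills the dispersive and $\partial_x(u^2)$ terms, leaving $\frac{d}{dt}\|u(t)\|_{L^2}^2=-\int_{\mathbb T}\lambda_x u^2\,dx$ (the paper records this identity up to an irrelevant sign), so Gronwall gives the exponential bound for bounded $\lambda_x$, uniform boundedness when $\partial_x\lambda\in L^1_tL^\infty_x$, and global well-posedness in $L^2$ by iterating Theorem~\ref{thm:poten}.

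The higher-norm part, however, has a genuine gap. You propose to prove the $s>0$ bounds by a direct energy-increment argument built on the symmetrized multiplier $k_1|k_1|^{2s}+k_2|k_2|^{2s}+k_3|k_3|^{2s}$ on $k_1+k_2+k_3=0$, but the trilinear estimate that would make each local increment controllable (let alone sublinear in $\|u\|_{H^s}^2$, which is what your polynomial conclusion needs) is never established: you yourself flag the resonant high-high-high interactions as fatal for the naive differential inequality when $s\le 1$ and defer to ``the same dispersive smoothing and cancellation'' without carrying it out. Making that quantitative is essentially the content of \cite{ckstt} and \cite{sta}, i.e.\ a substantial theorem in its own right, so as written this is a plan rather than a proof. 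The paper closes the same gap with no new estimate at all: it invokes Theorem~\ref{theo:main}, whose proof (Section~\ref{sec:evol}) requires only a growth bound at the regularity of the data as a hypothesis --- and at level $s=0$ that hypothesis is precisely your Gronwall bound, so there is no circularity. Applying Theorem~\ref{theo:main} with $s=0$, $s_1=s\in(0,1)$, and $T(t)=e^{Ct}$ (or $T(t)=C$ when $\partial_x\lambda\in L^1_tL^\infty_x$) gives $\|u(t)-e^{tL}g\|_{H^{s_1}}\le C_{\|g\|_2}\langle t\rangle T(t)^9$; since $e^{tL}$ is unitary on every Sobolev space, this yields $\|u(t)\|_{H^{s_1}}\le \|g\|_{H^{s_1}}+C_{\|g\|_2}\langle t\rangle T(t)^9$, which is exponential in the first case and polynomial in the second. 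Iterating in unit steps of regularity (feeding the new growth function back into Theorem~\ref{theo:main}) reaches any $s$. In short, the smoothing estimate already proved in the paper \emph{is} the almost conservation law you were trying to construct, and recognizing this is the missing idea in your proposal.
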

\begin{proof}
Note that using the equation we obtain
$$
\frac{d}{dt}\int_\mathbb{T}u^2 dx=2 \int_\mathbb{T} uu_t dx =\int_\mathbb{T} u^2  \lambda_x dx.
$$
Integrating in time we obtain
$$
\|u(t)\|_2^2\leq\|g\|_2^2+\int_0^t\|\lambda_x(s)\|_{L^\infty} \|u(s)\|_2^2 ds.
$$
Thus, by Gronval's inequality, we obtain
$$\|u\|_2\leq \|g\|_2 e^{\int_0^t\|\lambda_x(s)\|_{L^\infty}ds}.$$
Since $\lambda_x$ is bounded 
$$\|u\|_2\leq \|g\|_2 e^{Ct}.$$ 
Moreover if $\lambda_x\in L^1_tL^\infty_x$, then 
$$\|u\|_2\lesssim \|g\|_2.$$ 
In both cases, we can iterate the local solution and obtain a global-in-time solution evolving from an $L^2$ data.

To obtain the growth bound for the higher order norms we use Theorem~\ref{theo:main} repeatedly as follows. For $s\in(0,1)$, using the theorem with $s=0$, $s_1=s$, and $T(t)=e^{Ct}$ or $T(t)=C$ depending on the assumptions on $\lambda_x$, we obtain
$$\|u(t)-e^{tL}g\|_{H^{s}}\leq C_{\|g\|_2} \langle t\rangle T(t)^9.$$
Which implies by the unitarity of the linear evolution that
$$ \|u(t)\|_{H^{s}}\leq C_{\|g\|_2} \langle t\rangle T(t)^9.$$
One can continue iteratively with this process and reach any index $s$.
\end{proof}

We should note that, in \cite{bourgain, Bou2}, Bourgain studied  the growth of higher order Sobolev norms for KdV and the nonlinear Schr\"odinger equation (NLS). The critical observation was that if one has a  local   estimate of the form
$$\|u(t)\|_{H^s}\leq \|u_0\|_{H^s}+\|u_0\|_{H^s}^{1-\delta}$$
for $T=T(\|u_{0}\|_{H^1})$ then one can obtain
$$\|u(t)\|_{H^s}\leq C |t|^{\frac{1}{\delta}}.$$
This method was later refined by Staffilani in \cite{sta}. In particular, she proved that for the periodic KdV with a potential, the higher order Sobolev norms grow with a bound of the form $t^{(s-1)+}$ for any large $s$ assuming   an a priori bound on the $H^1$ norm. Staffilani's method
can also be applied in the case of time-dependent potentials\footnote{Personal communication with G.~Staffilani}, and for $s>1$ it gives a better polynomial growth bound than we obtained above, if one  knows that the $H^1$ norm remains bounded. Our result on the other hand gives some global bounds for any $s\geq 0$.

A final ingredient in our proof is the following Strichartz estimate of Bourgain \cite{Bou2}:
\begin{prop}\cite{Bou2} \label{prop:B}For any $\eps>0$ and $b>1/2$, we have
$$\|\chi_{[-\delta,\delta]}(t)u\|_{L^6_{t,x}(\mathbb R \times \mathbb T)}\leq C_{\eps,b} \|u\|_{X^{\eps,b}_\delta}.$$
\end{prop}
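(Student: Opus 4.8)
The plan is to reduce the claimed $X^{\eps,b}_\delta\to L^6$ bound to an estimate for the free Airy evolution and then to a lattice-counting problem. Since $\chi_{[-\delta,\delta]}u=\chi_{[-\delta,\delta]}\tilde u$ for any extension $\tilde u$ agreeing with $u$ on $[-\delta,\delta]$, it suffices to bound $\|\tilde u\|_{L^6_{t,x}}$ by $\|\tilde u\|_{X^{\eps,b}}$ and infimize over extensions. For $b>1/2$ the standard transfer principle lets me replace $\tilde u$ by the free evolution, so the whole estimate follows once I establish
$$
\|e^{tL}f\|_{L^6_{t,x}(\mathbb T^2)}\lesssim_\eps \|f\|_{H^\eps},
$$
where I use that $e^{tL}f=\sum_k a_k e^{i(kx+k^3t)}$ is $2\pi$-periodic in $t$ (so that space-time is the torus $\mathbb T^2$) and that a phase $e^{it\tau_0}$ leaves the $L^6$ norm unchanged; the only care needed is a preliminary $O(1)$ time localization so the $\mathbb R$-in-time and periodic pictures agree on $[-\delta,\delta]$.

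To prove the free estimate I write $\|e^{tL}f\|_{L^6}^6=\|(e^{tL}f)^3\|_{L^2}^2$ and expand on the Fourier side. With $f=\sum_k a_k e^{ikx}$ mean zero, Plancherel on $\mathbb T^2$ gives
$$
\|(e^{tL}f)^3\|_{L^2}^2=\sum_{k,\mu}\Big|\sum_{\substack{k_1+k_2+k_3=k\\ k_1^3+k_2^3+k_3^3=\mu}}a_{k_1}a_{k_2}a_{k_3}\Big|^2.
$$
The key is the algebraic identity, valid when $k_1+k_2+k_3=k$,
$$
k^3-(k_1^3+k_2^3+k_3^3)=3(k-k_1)(k-k_2)(k-k_3),
$$
so that on the support of the inner sum the resonance variable is governed by $(k-k_1)(k-k_2)(k-k_3)=(k^3-\mu)/3=:m$. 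Setting $b_i=k-k_i$, a contributing triple is exactly a factorization $b_1b_2b_3=m$ with prescribed sum $b_1+b_2+b_3=2k$.

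I then split according to whether $m=0$. For $m\neq 0$ (nonresonant) the number of ordered triples with $b_1b_2b_3=m$ is at most the number of factorizations of $|m|$ into three factors, which is $\lesssim_\eps|m|^\eps\lesssim N^\eps$ for frequencies of size $N$ by the divisor bound; this is the place the $\eps$-loss appears, and it is the product structure — rather than the generic point count on the cubic curve — that keeps the loss at $N^\eps$. Cauchy–Schwarz in the inner sum, using that each ordered triple determines $(k,\mu)$ uniquely, yields the nonresonant contribution $\lesssim N^\eps\|f\|_{L^2}^6$. For $m=0$ (resonant) some $k_i=k$, which forces the remaining two indices to be negatives of one another; the inner sum collapses to $\sim a_k\sum_j a_j a_{-j}$, and summing in $k$ gives a contribution $\lesssim\|f\|_{L^2}^6$ with no loss. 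Together these give the single-block bound $\|e^{tL}f\|_{L^6}\lesssim_\eps N^\eps\|f\|_{L^2}$ for $f$ supported on $|k|\le N$; a dyadic Littlewood–Paley decomposition and Cauchy–Schwarz over blocks upgrade this to $\|e^{tL}f\|_{L^6}\lesssim_\eps\|f\|_{H^\eps}$, and the transfer principle together with the time cutoff of the first step completes the proof.

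The main obstacle is the counting step, and within it the resonant set $\mu=k^3$: there one factor $(k-k_i)$ vanishes, the factorization count degenerates (it is $\sim N$, not $N^\eps$), and one must instead exploit the explicit resonant structure and the mean-zero hypothesis to sum without loss — precisely the cancellation that the introduction flags as ``the enemy.'' The remaining points are purely functional-analytic: arranging the time localization so that the periodic-time free estimate transfers to the $X^{\eps,b}_\delta$ norm for $b>1/2$, which is routine once the counting estimate is in hand.
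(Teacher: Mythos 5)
This proposition is not proved in the paper at all: it is quoted verbatim from Bourgain \cite{Bou2}, so there is no internal proof to compare against. Your reconstruction is correct and is essentially Bourgain's original argument: reduce to the periodic-in-time free evolution via the $b>1/2$ transfer principle (with the $O(1)$ time localization you note, since the free $L^6$ bound only holds per period), then expand $\|(e^{tL}f)^3\|_{L^2(\mathbb T^2)}^2$ by Plancherel and count solutions of $k_1+k_2+k_3=k$, $k_1^3+k_2^3+k_3^3=\mu$ using the identity $k^3-\mu=3(k-k_1)(k-k_2)(k-k_3)$ and the divisor bound, treating the degenerate set $\mu=k^3$ separately. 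One small correction: the mean-zero hypothesis is not actually needed for the resonant contribution — the collapsed sum is bounded by $|a_k|\,\|a\|_{\ell^2}^2$ (plus the lower-order term $|a_k|^2|a_{-k}|$) by Cauchy--Schwarz regardless of whether $a_0$ vanishes, so that term sums without loss on its own; invoking mean-zero there is harmless but superfluous.
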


\section{Proofs of Theorem~\ref{theo:main1} and Theorem~\ref{theo:main}}\label{sec:evol}
We will give the proofs of these theorems simultaneously. This implies that Theorem~\ref{theo:main} would be valid also for $s\in(-1/2,0)$ if one knows the required well-possedness theory in this regularity level.

We will give the proof under the assumption that the initial data (and hence the solution at each time) is mean zero. To remove this assumption one changes the equation introducing two terms of the form $\langle g\rangle u_x$ and $\langle g \rangle \lambda_x$. The first term changes the linear operator from $-\partial_x^3$ to $L$ as it is stated in the theorem. We note that after this change the resonances and the multilinear estimates remain the same. The second term is in $H^{s}$ for any $s$, and in the calculations below  it  will only go into the $B$ operator defined below, which satisfies the same estimates.

Using the notation $u(x,t)=\sum_k u_k(t) e^{ikx}$ and $\lambda(x,t)=\sum_k\lambda_k(t)e^{ikx}$, we write  \eqref{eq:potentialkdv} on the Fourier side:
$$\partial_t u_k=ik^3u_k-ik  \sum_{k_1+k_2=k}(\lambda_{k_1}+u_{k_1})u_{k_2},\,\,\,\,\,\,u_k(0)=\widehat g(k),$$
Because of the mean zero assumption on $u$ and $\lambda$,  there are no zero harmonics in this equation.
Using the transformations
\begin{align*}
u_k(t)&=v_k(t)e^{ik^3t},\\
\lambda_k(t)&=\Lambda_k(t)e^{ik^3t},
\end{align*}
and the identity
$$(k_1+k_2)^3-k_1^3-k_2^3=3(k_1+k_2)k_1k_2,$$
the equation can be written in the form
\be\label{v_eq}
\partial_t v_k=-ik   \sum_{k_1+k_2=k}e^{-i3kk_1k_2t}(v_{k_1}+\Lambda_{k_1})v_{k_2}.
\ee

We start with the following proposition which follows from differention by parts. A similar proposition was proved in \cite{ETZ3}.
\begin{prop}\label{thm:dbp}
The system \eqref{v_eq} can be written in the following form:
\be\label{v_eq_dbp}
\partial_t[v+B(\Lambda+v,v)]_k=\rho_k+B(\partial_t\Lambda,v)_k+R(\Lambda+2v,\Lambda+v,v)_k,
\ee
where we define $B(u,v)_0=\rho_0=R(u,v,w)_0=0$, and for $k\neq 0$, we define
\begin{align*}
B(u,v)_k&= -\frac13\sum_{k_1+k_2=k}\frac{e^{-3ikk_1k_2t}  u_{k_1}v_{k_2}}{k_1k_2}
\\
\rho_k&=\frac{i}3\Lambda_k\sum_{|j|\neq|k|}\frac{\Lambda_j\overline{v_j}}{j}-\frac{i}3\frac{(\overline{\Lambda_k}+2\overline{v_k})(\Lambda_k+v_k)v_k}{k}\\
R(u,v,w)_k&=\frac{i}{3}\sum_{\stackrel{k_1+k_2+k_3=k}{(k_1+k_2)(k_1+k_3)(k_2+k_3)\neq 0}} \frac{e^{-3it(k_1+k_2)(k_2+k_3)(k_3+k_1)}}{k_1} u_{k_1}v_{k_2}w_{k_3}
\end{align*}
\end{prop}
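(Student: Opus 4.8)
The plan is to derive \eqref{v_eq_dbp} from \eqref{v_eq} by a single differentiation by parts in time, followed by one re-substitution of the equation into itself. The key structural fact is that, because $u$ and $\lambda$ are mean zero and $k\neq 0$, every index in the quadratic sum obeys $k,k_1,k_2\neq 0$, so the phase $e^{-3ikk_1k_2t}$ never degenerates; the bilinear nonlinearity has no resonances and the whole sum may be differentiated by parts at once. Concretely I would write
\[
-ik\,e^{-3ikk_1k_2t}=\frac{1}{3k_1k_2}\,\partial_t e^{-3ikk_1k_2t}
\]
and apply the product rule inside \eqref{v_eq}. The resulting total-derivative term is $-\partial_t B(\Lambda+v,v)_k$, which I transpose to the left-hand side to form $\partial_t[v+B(\Lambda+v,v)]_k$. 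What remains on the right is
\[
-\sum_{k_1+k_2=k}\frac{e^{-3ikk_1k_2t}}{3k_1k_2}\,\partial_t\big[(v_{k_1}+\Lambda_{k_1})v_{k_2}\big].
\]

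First I would isolate the part of $\partial_t$ that falls on $\Lambda_{k_1}$: by the definition of $B$ this is precisely $B(\partial_t\Lambda,v)_k$. For the two pieces in which $\partial_t$ falls on $v_{k_1}$ or on $v_{k_2}$, I substitute \eqref{v_eq} once more, turning the quadratic expression into a sum over three frequencies summing to $k$ with a product of two oscillatory factors. Using $(k_1+k_2+k_3)^3-k_1^3-k_2^3-k_3^3=3(k_1+k_2)(k_2+k_3)(k_3+k_1)$, the two phases collapse into the single fully symmetric factor $e^{-3it(k_1+k_2)(k_2+k_3)(k_3+k_1)}$. Each of the two pieces then has exactly the shape of $R$, with the surviving $1/k$ weight and the first argument carried by the un-differentiated factor; by the linearity of $R$ in its first slot and its symmetry in the last two, the nonresonant contributions (those with $(k_1+k_2)(k_2+k_3)(k_3+k_1)\neq0$) add up to $R(\Lambda+2v,\Lambda+v,v)_k$.

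The heart of the argument is the resonant part, where one pairwise sum vanishes. Since the two frequencies produced by differentiating a factor sum to that factor's original (nonzero) index, a resonance can occur only when one of these cancels the passive frequency, so each of the two cubic pieces splits into two resonant families indexed by a single free variable $j$, with the doubly-resonant configuration $(k,k,-k)$ lying in both families and needing to be removed once. The plan is to parametrize each family by $j$, simplify the products using the reality relations $v_{-j}=\overline{v_j}$ and $\Lambda_{-j}=\overline{\Lambda_j}$, and then exploit the antisymmetry identities
\[
\sum_{|j|\neq|k|}\frac{|v_j|^2}{j}=\sum_{|j|\neq|k|}\frac{|v_j+\Lambda_j|^2}{j}=0.
\]
I expect this antisymmetry, combined with the single subtraction at $(k,k,-k)$, to kill the families carrying $|v_j|^2$ and $|v_j+\Lambda_j|^2$ and to turn the remaining restricted sums into symmetric sums over $|j|\neq|k|$; collecting the survivors and using $(v_k+\Lambda_k)-v_k=\Lambda_k$ should reproduce exactly the two terms of $\rho_k$, namely $\frac{i}{3}\Lambda_k\sum_{|j|\neq|k|}\frac{\Lambda_j\overline{v_j}}{j}$ and $-\frac{i}{3}\frac{(\overline{\Lambda_k}+2\overline{v_k})(\Lambda_k+v_k)v_k}{k}$.

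I expect the resonant bookkeeping to be the main obstacle. One must track precisely the admissible range of $j$ (the asymmetric restriction coming from $k_1\neq0$ versus the symmetric restriction $|j|\neq|k|$ in the final formula), implement the inclusion--exclusion at the doubly-resonant index correctly, and verify that the non-smooth self-interaction terms genuinely cancel rather than merely recombine. This cancellation of the worst resonant contributions, which survives even in the presence of the potential $\lambda$, is exactly what renders $\rho_k$ regular enough to drive the smoothing asserted in Theorems~\ref{theo:main1} and \ref{theo:main}.
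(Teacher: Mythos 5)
Your proposal follows essentially the same route as the paper's own proof: a single differentiation by parts (legitimate because the mean-zero assumption forces $k,k_1,k_2\neq 0$, so the phase never degenerates), re-substitution of \eqref{v_eq} into the two terms where $\partial_t$ hits $v$, the cubic phase identity collapsing the two exponentials into $e^{-3it(k_1+k_2)(k_2+k_3)(k_3+k_1)}$, the split of the resonant set into two $j$-indexed families plus the doubly-resonant point (the paper's disjoint sets $S_1,S_2,S_3$, handled there by disjointness rather than your inclusion--exclusion, which is equivalent), and the $j\leftrightarrow -j$ parity cancellations that reduce the resonant sum to $\rho_k$. The only blemish is notational: in the ordering $(k_1,k_2,k_3)$ the doubly-resonant configuration is $(-k,k,k)$, not $(k,k,-k)$ (the latter would violate the constraint $k_2+k_3\neq 0$); apart from this slip the plan reproduces the paper's argument step for step.
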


\begin{proof}[Proof of Proposition~\ref{thm:dbp}]
Since $e^{-3ikk_1k_2t}=\partial_{t}( \frac{i}{3kk_1k_2}e^{-3ikk_1k_2t})$, using differentiation by parts we can rewrite \eqref{v_eq} as
\begin{align*}
\partial_{t}v_{k}&=\partial_{t}\Big(   \sum_{k_1+k_2=k}\frac{e^{-3ikk_1k_2t} (\Lambda_{k_1}+v_{k_1}) v_{k_2}}{3 k_1k_2}\Big)-  \sum_{k_1+k_2=k}\frac{e^{-3ikk_1k_2t}}{3k_1k_2}\partial_{t} [(\Lambda_{k_1}+v_{k_1}) v_{k_2}].
\end{align*}
Recalling the definition of  $B$, we can rewrite this equation in the form:
\be\label{eq:partialv}
\partial_t[v+B(\Lambda+v,v)]_k=B(\partial_t\Lambda,v)_k-\frac13\sum_{k_1+k_2=k}\frac{e^{-3ikk_1k_2t}}{k_1k_2} [v_{k_2}\partial_{t}v_{k_1}+(\Lambda_{k_1}+v_{k_1})\partial_t  v_{k_2}].
\ee
Note that since $v_0=0$, in the sums above $k_1$ and $k_2$ are not zero. Using \eqref{v_eq}, we have
\begin{align*}
&-\frac13\sum_{k_1+k_2=k}\frac{e^{-3ikk_1k_2t}}{k_1k_2}v_{k_2}\partial_{t}v_{k_1}  =\frac{i}3 \sum_{k=k_1+k_2}\frac{e^{-3ikk_1k_2t}v_{k_2}}{k_2}  \sum _{\mu+\nu=k_1\neq 0}e^{-3itk_1\mu\nu} (\Lambda_{\mu}+v_{\mu}) v_{\nu} \\
&= \frac{i}3\sum_{\stackrel{k=k_2+\mu+\nu}{\mu+\nu\neq 0}}\frac{e^{-3it[kk_2(\mu+\nu)+\mu\nu(\mu+\nu)]}}{k_2}v_{k_2} (\Lambda_{\mu}+v_{\mu})  v_{\nu}.
\end{align*} 
Using the identity
$$kk_2+\mu\nu=(k_2+\mu+\nu)k_2+\mu\nu=(k_2+\mu)(k_2+\nu)$$
and  by renaming the variables $k_2\to k_1$, $\mu\to k_2$, $\nu\to k_3$, we have that
\begin{multline}\label{reso1}
-\frac13 \sum_{k_1+k_2=k}\frac{e^{-3ikk_1k_2t}}{k_1k_2}v_{k_2}\partial_{t}v_{k_1} \\ 
=\frac{i}3\sum_{\stackrel{k_1+k_2+k_3=k}{k_2+k_3\neq 0}}\frac{e^{-3it(k_1+k_2)(k_2+k_3)(k_3+k_1)}}{k_1}v_{k_1} (\Lambda_{k_2}+v_{k_2}) v_{k_3}.
\end{multline}
Similarly,
\begin{multline}\label{reso2}
-\frac13 \sum_{k_1+k_2=k}\frac{e^{-3ikk_1k_2t}}{k_1k_2}(\Lambda_{k_1}+v_{k_1})\partial_{t}v_{k_2}  \\ =\frac{i}3\sum_{\stackrel{k_1+k_2+k_3=k}{k_2+k_3\neq 0}}\frac{e^{-3it(k_1+k_2)(k_2+k_3)(k_3+k_1)}}{k_1}(\Lambda_{k_1}+v_{k_1}) (\Lambda_{k_2}+v_{k_2}) v_{k_3}.
\end{multline}
Combining \eqref{reso1} and \eqref{reso2}, we can rewrite \eqref{eq:partialv} as
\begin{align*}
\partial_t[v+B(\Lambda+v,v)]_k&=B(\partial_t\Lambda,v)_k\\
&+\frac{i}3\sum_{\stackrel{k_1+k_2+k_3=k}{k_2+k_3\neq 0}}\frac{e^{-3it(k_1+k_2)(k_2+k_3)(k_3+k_1)}}{k_1}(\Lambda_{k_1}+2v_{k_1}) (\Lambda_{k_2}+v_{k_2}) v_{k_3}.
\end{align*}
Note that the set on which the phase on the right hand side vanishes is the disjoint union of the following  sets
\begin{align*}
S_{1}&=\{k_1+k_2=0\}\cap\{k_3+k_1=0\}\cap \{k_2+k_3\neq 0\}\Leftrightarrow \{k_1=-k,\ k_2=k,\ k_3=k\},\\
S_{2}&=\{k_1+k_2=0\} \cap \{ k_3+k_1\ne 0\} \cap \{k_2+k_3\neq 0\}\Leftrightarrow \{k_1=j,\ k_2=-j,\ k_3=k,\ |j| \neq |k| \},\\
S_{3}&=\{k_3+k_1=0\}\cap\{k_1+k_2\ne 0\}   \cap \{k_2+k_3\neq 0\}\Leftrightarrow \{k_1=j,\ k_2=k,\ k_3=-j,\ |j| \neq |k| \}.
\end{align*}
Thus, using the definition of $R(u,v,w)$, we have
\begin{align*}
\partial_t[v+B(\Lambda+v,v)]_k&=B(\partial_t\Lambda,v)_k+R(\Lambda+2v,\Lambda+v,v)_k\\
&+\frac{i}3\sum_{\ell=1}^{3}\sum_{S_{\ell}}\frac{(\Lambda_{k_1}+2v_{k_1}) (\Lambda_{k_2}+v_{k_2}) v_{k_3}}{k_1}.
\end{align*}
The proposition follows if we show that the second line above is equal to $\rho_k$. Note that
\begin{align}\label{reso3}
&\sum_{\ell=1}^{3}\sum_{S_\ell}\frac{(\Lambda_{k_1}+2v_{k_1}) (\Lambda_{k_2}+v_{k_2}) v_{k_3}}{k_1}=-\frac{(\Lambda_{-k}+2v_{-k}) (\Lambda_{k}+v_{k}) v_{k}}{k}\\\nonumber
&+v_k\sum_{|j|\neq |k|} \frac{(\Lambda_{j}+2v_{j}) (\Lambda_{-j}+v_{-j})}{j}+(\Lambda_{k}+v_{k})\sum_{|j|\neq |k|}\frac{(\Lambda_{j}+2v_{j})  v_{-j}}{j}
\end{align}
Note that using $v_j=\overline{v_{-j}}$ and $\Lambda_j=\overline{\Lambda_{-j}}$, we can rewrite the second line above as
\begin{align*}
&v_k\sum_{|j|\neq |k|} \frac{|\Lambda_{j}+v_{j}|^2+|v_j|^2+v_j\overline{\Lambda_{j}}}{j}+(\Lambda_{k}+v_{k})\sum_{|j|\neq |k|}\frac{(\Lambda_{j}\overline{v_j}+2|v_{j}|^2)}{j}\\
&=v_k\sum_{|j|\neq |k|} \frac{v_j\overline{\Lambda_{j}}}{j}+(\Lambda_{k}+v_{k})\sum_{|j|\neq |k|}\frac{\Lambda_{j}\overline{v_j}}{j}\\
&=2v_k\sum_{|j|\neq |k|} \frac{\Re(v_j\overline{\Lambda_{j}})}{j}+\Lambda_{k}\sum_{|j|\neq |k|}\frac{\Lambda_{j}\overline{v_j}}{j}.
\end{align*}
The first equality follows from the symmetry relation $j \leftrightarrow -j$. By the same token, the first summand in the last line above vanishes
since $\Re(v_j\overline{\Lambda_{j}})=\Re(\overline{v_j}\Lambda_{j})=\Re(v_{-j}\overline{\Lambda_{-j}}) $.
Using this in \eqref{reso3} we obtain
\begin{align*}
\sum_{\ell=1}^{3}\sum_{S_\ell}\frac{(\Lambda_{k_1}+2v_{k_1}) (\Lambda_{k_2}+v_{k_2}) v_{k_3}}{k_1}&=-\frac{(\Lambda_{-k}+2v_{-k}) (\Lambda_{k}+v_{k}) v_{k}}{k}+\Lambda_{k}\sum_{|j|\neq |k|}\frac{\Lambda_{j}\overline{v_j}}{j}\\
&=\frac3i \rho_k,
\end{align*}
which yields the assertion of the Proposition.
\end{proof}

Integrating \eqref{v_eq_dbp} from $0$ to $t$, we obtain
\begin{align*}
v_k(t)-v_k(0)&=-B(\Lambda+v,v)_k(t)+B(\Lambda+v,v)_k(0)+\int_0^t B(\partial_r\Lambda,v)_k(r) dr\\
&+ \int_0^t \rho_k(r) dr+\int_0^t R(\Lambda+2v,\Lambda+v,v)_k(r)dr.
\end{align*}
Transforming back to the $u$, $\lambda$ variables,  we have
\begin{align}\label{new_u}
u_k(t)-e^{ik^3t}u_k(0)&=-\mathcal{B}(\lambda+u,u)_k(t)+e^{ik^3t}\mathcal{B}(\lambda+u,u)_k(0)\\
&+\int_0^t e^{ik^3(t-r)}\mathcal{B}(e^{rL}\partial_r (e^{-rL}\lambda),u)_k(r) dr
+ \int_0^t e^{ik^3(t-r)}\tilde\rho_k(r) dr\nonumber\\
&+\int_0^t e^{ik^3(t-r)}\mathcal{R}(\lambda+2u,\lambda+u,u)_k(r)dr,\nonumber
\end{align}
where
\begin{align*}
\mathcal{B}(u,v)_k&=-\frac13\sum_{k_1+k_2=k}\frac{  u_{k_1} v_{k_2}}{k_1k_2},\\
\tilde\rho_k&=\frac{i}3\lambda_k\sum_{|j|\neq|k|}\frac{\lambda_j\overline{u_j}}{j}-\frac{i}3\frac{(\overline{\lambda_k}+2\overline{u_k})(\lambda_k+u_k)u_k}{k}\\
\mathcal{R}(u,v,w)_k&=\frac{i}{3}\sum_{\stackrel{k_1+k_2+k_3=k}{(k_2+k_3)(k_1+k_2)(k_1+k_3)\neq 0}} \frac{u_{k_1}v_{k_2}w_{k_3}}{k_1}.
\end{align*}
\begin{lemma}\label{apriori} For $s>-1/2$ and $s_1\leq s+1$, we have
$$\|\mathcal{B}(u,v)\|_{H^{s_1}}\lesssim \|u\|_{H^s}\|v\|_{H^s}.$$
For $s>-1/2$ and $s\leq s_1<3s+1$, we have
$$
\|\tilde\rho\|_{H^{s_1}}  \lesssim \|u\|_{H^s}\big(\|\lambda\|_{H^{s_1}}^2 +\|u\|_{H^s}^2\big).
$$
\end{lemma}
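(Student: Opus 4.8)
The plan is to prove the two estimates separately, exploiting throughout that all sequences are mean zero, so every frequency satisfies $|k|\geq 1$.

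For the bilinear bound on $\mathcal{B}$, I would first note that since $\mathcal{B}(u,v)$ is mean zero, its $H^{s_1}$ norm is monotone increasing in $s_1$; hence it suffices to prove the claim at the endpoint $s_1=s+1>1/2>0$. Writing $a_k=|k|^s|u_k|$ and $b_k=|k|^s|v_k|$, so that $\|a\|_{\ell^2}=\|u\|_{H^s}$ and $\|b\|_{\ell^2}=\|v\|_{H^s}$, the quantity to bound is
\[
|k|^{s+1}|\mathcal{B}(u,v)_k|\lesssim \sum_{k_1+k_2=k}\frac{|k|^{s+1}}{|k_1|^{s+1}|k_2|^{s+1}}a_{k_1}b_{k_2}.
\]
Splitting into the regions $|k_1|\geq|k_2|$ and $|k_2|\geq|k_1|$ and using $|k|\leq 2\max(|k_1|,|k_2|)$ together with $s+1>0$, the factor $|k|^{s+1}$ is absorbed by the larger of $|k_1|^{-(s+1)},|k_2|^{-(s+1)}$, leaving (in the first region) the kernel $|k_2|^{-(s+1)}$. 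The key point is that $s>-1/2$ makes $|k_2|^{-(s+1)}b_{k_2}$ lie in $\ell^1$ with norm $\lesssim\|b\|_{\ell^2}$ by Cauchy--Schwarz (since $2(s+1)>1$); Young's inequality $\ell^1\ast\ell^2\to\ell^2$ then closes the estimate, and the symmetric region is identical with the roles of $a,b$ exchanged.

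For the estimate on $\tilde\rho$ I would split it into the nonlocal piece $\mathrm{I}_k=\lambda_k\sum_{|j|\neq|k|}\tfrac{\lambda_j\overline{u_j}}{j}$ and the diagonal piece $\mathrm{II}_k=\tfrac{(\overline{\lambda_k}+2\overline{u_k})(\lambda_k+u_k)u_k}{k}$. For $\mathrm{I}$, I would extract the $k$-independent constant $C=\sum_j\tfrac{\lambda_j\overline{u_j}}{j}$ (well defined since $u_0=\lambda_0=0$), writing $\sum_{|j|\neq|k|}$ as $C$ minus the two terms $j=\pm k$. Pairing $|k|^{s_1}|\lambda_k|$ against $|k|^{-1-s_1}|u_k|$ by Cauchy--Schwarz gives $|C|\lesssim\|\lambda\|_{H^{s_1}}\|u\|_{H^s}$, valid because $s_1\geq s>-1/2$ forces $s_1>-1-s$; the main contribution $\lambda_kC$ then has $H^{s_1}$ norm $|C|\,\|\lambda\|_{H^{s_1}}\lesssim\|u\|_{H^s}\|\lambda\|_{H^{s_1}}^2$. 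The two excluded terms $j=\pm k$ are single-frequency expressions of type $\lambda\lambda u$, which I fold into the treatment of $\mathrm{II}$. Expanding $\mathrm{II}$ produces diagonal monomials of the three types $uuu$, $\lambda uu$ and $\lambda\lambda u$, each of the form $\tfrac1k$ times a product of three coefficients at a single frequency. Setting $\alpha_k=|k|^s|u_k|$ and $\beta_k=|k|^{s_1}|\lambda_k|$, every such term reduces to controlling $\big\||k|^{\gamma}(\text{product of }\alpha,\beta)\big\|_{\ell^2}$ for an explicit exponent: one finds $\gamma=s_1-1-3s$ for $uuu$, $\gamma=-1-2s$ for $\lambda uu$, and $\gamma=-1-s_1-s$ for $\lambda\lambda u$. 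Under the hypotheses $s>-1/2$ and $s\leq s_1<3s+1$ each exponent is negative, so $|k|^{\gamma}\leq1$ and the weight drops out; the remaining sums are estimated by $\ell^2\hookrightarrow\ell^6$ for the cubic term ($\|\alpha^3\|_{\ell^2}=\|\alpha\|_{\ell^6}^3\leq\|\alpha\|_{\ell^2}^3$) and by H\"older with $\ell^2\hookrightarrow\ell^\infty$ for the mixed terms, yielding $\|u\|_{H^s}^3$, $\|\lambda\|_{H^{s_1}}\|u\|_{H^s}^2$, and $\|\lambda\|_{H^{s_1}}^2\|u\|_{H^s}$ respectively, the middle bound being absorbed into the stated right-hand side by AM--GM.

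The only delicate point is the purely resonant cubic term $\tfrac{|u_k|^2u_k}{k}$, whose $H^{s_1}$ bound requires precisely $s_1<3s+1$ and is therefore sharp (cf.\ Remark~2); every other term carries strictly more room. I would stress that no further cancellation is needed at this stage, since the dangerous $\Re(v_j\overline{\Lambda_j})$ resonance has already been eliminated by symmetry in the derivation of $\rho_k$ in Proposition~\ref{thm:dbp}.
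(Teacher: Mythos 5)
Your proof is correct and follows essentially the same route as the paper's: the $\mathcal{B}$ bound via the $|k_1|\geq|k_2|$ splitting, Cauchy--Schwarz (using $2(s+1)>1$) and Young's $\ell^1\ast\ell^2\to\ell^2$ inequality, and the $\tilde\rho$ bound by treating the nonlocal sum with a weighted Cauchy--Schwarz pairing and the diagonal single-frequency terms by discarding a nonpositive power of $|k|$ and using $\ell^2\hookrightarrow\ell^p$ embeddings. The only cosmetic differences are your reduction to the endpoint $s_1=s+1$ (the paper instead carries the factor $|k|^{s_1-s-1}\leq1$ through the estimate) and your rewriting of $\sum_{|j|\neq|k|}$ as the full sum minus the $j=\pm k$ corrections, where the paper simply bounds the truncated sum by the full sum of absolute values, uniformly in $k$.
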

\begin{proof}
By symmetry we can assume in the estimate for   $\mathcal{B}(u,v)$ that $|k_1|\geq |k_2|$. Thus, for $s_1<s+1$ and $s>-1/2$, we have
\begin{align*}
\|\mathcal{B}(u,v)\|_{H^{s_1}} &\lesssim \Big\|\sum_{k_1+k_2=k,\,\,\,|k_1|\geq|k_2|}\frac{  |u_{k_1} v_{k_2}|  }{ |k_1k_2|}\Big\|_{H^{s_1}}
\\&\lesssim  \Big\|\sum_{k_1+k_2=k,\,\,\,|k_1|\geq|k_2|}\frac{ |k_1|^s |u_{k_1}| |k_2|^s |v_{k_2}| |k|^{s_1-s-1}}{|k_2|^{ s+1}}\Big\|_2\\
&\lesssim \Big\|\sum_{k_1+k_2=k,\,\,\,|k_1|\geq|k_2|}\frac{ |k_1|^s |u_{k_1}| |k_2|^s |v_{k_2}|  }{|k_2|^{s+1}}\Big\|_2\\
&\lesssim \Big\|\frac{ |k|^s v_{k}   }{|k|^{1+s}}\Big\|_{\ell^1} \big\||k|^s u_{k}\big\|_2
%\\&
\lesssim  \big\|  |k|^s v_{k}\big\|_2  \big\||k|^{-1-s}\big\|_2 \|u\|_{H^s}\lesssim \|u\|_{H^s}\|v\|_{H^s}.
\end{align*}
In the last line we used Young's inequality and Cauchy-Schwarz.

Now note that for $s_1<3s+1$,
$$
\Big\|\frac{u_kv_kw_k}{k}\Big\|_{H^{s_1}}= \big\||u_kv_kw_k||k|^{3s} |k|^{s_1-3s-1}\big\|_{2}\lesssim
\big\||u_kv_kw_k||k|^{3s} \big\|_{2} \lesssim \|u\|_{H^s}\|v\|_{H^s}\|w\|_{H^s}.
$$
Also note that for any $-1/2\leq s$,
\begin{align*}
\Big\|\lambda_k\sum \frac{|\lambda_j||v_j|}{|j|}\Big\|_{H^{s_1}}&\leq \|\lambda\|_{H^{s_1}} \sum \frac{|\lambda_j|}{|j|^{1+s}}|u_j||j|^s\leq
\|\lambda\|_{H^{s_1}} \|\lambda\|_{H^{-s-1}}\|u\|_{H^s}\\
&\leq \|\lambda\|_{H^{s_1}}^2 \|u\|_{H^s}.
\end{align*}
The last two estimates imply the bound for $\tilde\rho$.
\end{proof}

Using the estimates in Lemma~\ref{apriori} in the equation \eqref{new_u}, we obtain (for $s>-1/2$ and $s_1<\min(3s+1,s+1)$)
\begin{align}\nonumber
&\|u(t)-e^{tL}g\|_{H^{s_1}}\lesssim \|\lambda(t)+u(t)\|_{H^s}\|u(t)\|_{H^s}+\|\lambda(0)+g\|_{H^s}\|g\|_{H^s} \\
&+\int_0^t \|e^{rL}\partial_r e^{-rL}\lambda(r)\|_{H^s}\|u(r)\|_{H^s}  dr+\int_0^t \|u(r)\|_{H^s}\big(\|\lambda(r)\|_{H^{s_1}}^2+\|u(r)\|_{H^s}^2\big)  dr  \nonumber \\
&+\Big\|\int_0^t e^{ik^3(t-r)}\mathcal{R}(\lambda+2u,\lambda+u,u)_k(r) dr\Big\|_{H^{s_1}} \nonumber\\
\label{udelta} &\lesssim \|u(t)\|_{H^s}+\|u(t)\|_{H^s}^2+\|g\|_{H^s}+\|g\|_{H^s}^2+\int_0^t \big(\|u\|_{H^s}+\|u\|_{H^s}^3 \big)dr \\
&+\Big\|\int_0^t e^{L(t-r)}\mathcal{R}(\lambda+2u,\lambda+u,u)(r) \, dr\Big\|_{H^{s_1}}, \nonumber
\end{align}
where the implicit constant in the second inequality depends on $\lambda\in C^\infty$ and
$$\mathcal{R}(u,v,w)(r,x)=\sum_{k\neq 0} \mathcal{R}( u,v,w)_k(r)  e^{ikx}.$$
Since our nonlinearity after differentiation by parts is not $uu_x$ anymore, we will be able to avoid the $Y^{s_1}$ and $Z^{s_1}$ spaces. Instead we will use the embedding $X^{s_1,b}\subset L^\infty_t H^{s_1}_x$ for $b>1/2$ and the following lemma from \cite{GTV}. Let $\psi_\delta(t):=\psi(t/\delta)$, where $\psi \in C^\infty$ and supported on $[-2,2]$, and $\psi(t)=1$ on $[-1,1]$.  
\begin{lemma} For $-\frac12<b^\prime\leq 0\leq b \leq b^\prime+1$, we have
\be\label{lem:xsb}
\Big\|\psi_\delta(t)\int_0^t e^{L(t-r)} F(r) dr \Big\|_{X^{s,b}}\lesssim \delta^{1-b+b^\prime} \|F\|_{X^{s,b^\prime}_\delta}.
\ee
\end{lemma}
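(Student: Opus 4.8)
The plan is to prove the estimate by reducing it to a purely one-dimensional estimate in the time variable. First, since the quantity on the left depends on $F$ only through the Duhamel integral against the cutoff, I would fix an arbitrary extension $\tilde F$ of $F$ to all of $\mathbb R$, prove the bound with $\tilde F$ in place of $F$ on both sides, and take the infimum over extensions to recover the restricted norm on the right. Next I would exploit that both the $X^{s,b}$ norm and all the operators involved decouple over the spatial frequency $k$. Writing $F=\sum_k F_k(t)e^{ikx}$, the semigroup $e^{tL}$ acts on the $k$-th mode as multiplication by a phase $e^{it\omega(k)}$ (with $\omega(k)$ the dispersion relation of $-iL$, to which the modulation weight $\langle\tau-\omega(k)\rangle$ is adapted), and the Duhamel operator $\int_0^t e^{L(t-r)}\cdot\,dr$ acts mode-by-mode as well. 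Passing to the profile $f(t)=e^{-it\omega(k)}F_k(t)$ for each fixed $k$ kills the phase in the antiderivative and turns the weight $\langle\tau-\omega(k)\rangle$ into $\langle\tau\rangle$. Since the factors $\langle k\rangle^s$ and the $\ell^2_k$ summation survive untouched, the whole lemma reduces to the scalar-in-time estimate
\[
\Big\|\psi_\delta(t)\int_0^t f(r)\,dr\Big\|_{H^b_t}\lesssim \delta^{1-b+b'}\|f\|_{H^{b'}_t},
\]
to be proved uniformly in $k$ for $-\tfrac12<b'\le 0\le b\le b'+1$.

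For the scalar estimate I would decompose $f$ according to its temporal frequency relative to $1/\delta$, using $\int_0^t f(r)\,dr=\int \hat f(\tau)\frac{e^{it\tau}-1}{i\tau}\,d\tau$. On the low piece $|\tau|\le 1/\delta$ I would Taylor expand $e^{it\tau}-1=\sum_{n\ge1}(it\tau)^n/n!$, so that after inserting $\psi_\delta$ the contribution is a sum of terms $\tfrac{i^{n-1}}{n!}\big(t^n\psi_\delta(t)\big)\int_{|\tau|\le 1/\delta}\tau^{n-1}\hat f(\tau)\,d\tau$. Here $\|t^n\psi_\delta\|_{H^b}\lesssim\delta^{n+1/2-b}$ by scaling, while Cauchy--Schwarz together with $b'\le0$ bounds the coefficient by $\delta^{-n+b'+1/2}\|f\|_{H^{b'}}$; the product is exactly $\delta^{1-b+b'}$, with an $n$-summable constant coming from $1/n!$.

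For the high piece $|\tau|>1/\delta$ I would split $e^{it\tau}-1$ into its two summands. The $-1$ part produces a constant multiple of $\psi_\delta(t)$, the constant being controlled by $\|f\|_{H^{b'}}\,\big\||\tau|^{-1}\langle\tau\rangle^{-b'}\indicator_{|\tau|>1/\delta}\big\|_{L^2}\lesssim \delta^{1/2+b'}\|f\|_{H^{b'}}$, where convergence of the $L^2$ integral is precisely where the hypothesis $b'>-1/2$ enters; combined with $\|\psi_\delta\|_{H^b}\lesssim\delta^{1/2-b}$ this gives $\delta^{1-b+b'}$. The remaining piece is the product $\psi_\delta(t)g(t)$ with $\hat g(\tau)=(i\tau)^{-1}\hat f(\tau)\indicator_{|\tau|>1/\delta}$.

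\textbf{This product is the main obstacle.} The operator bound $\|\psi_\delta g\|_{H^b}\lesssim\delta^{-b}\|g\|_{H^b}$, from interpolating the $L^2$- and $H^1$-mapping of multiplication by $\psi_\delta$, loses a full factor $\delta^{-b}$ over the target, because differentiating the sharply concentrated cutoff costs $\delta^{-1}$. The way to close it is a fractional Leibniz (commutator) estimate, most transparently via the Gagliardo seminorm: splitting $\psi_\delta(t)g(t)-\psi_\delta(t')g(t')$ into the two natural differences and estimating the kernel $|\psi_\delta(t)-\psi_\delta(t')|\le\min(2,\delta^{-1}|t-t'|)$ yields, for $0\le b\le1$,
\[
\|\psi_\delta g\|_{H^b}\lesssim \|g\|_{H^b}+\delta^{-b}\|g\|_{L^2}.
\]
The decisive observation is that the factor $|\tau|^{-1}$ in $\hat g$, combined with the localization $|\tau|>1/\delta$, supplies both $\|g\|_{H^b}\lesssim\delta^{1+b'-b}\|f\|_{H^{b'}}$ (using $b\le b'+1$) and the improved smallness $\|g\|_{L^2}\lesssim\delta^{1+b'}\|f\|_{H^{b'}}$; the latter is exactly what absorbs the dangerous $\delta^{-b}$, since $\delta^{-b}\cdot\delta^{1+b'}=\delta^{1-b+b'}$. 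Matching these powers is the only delicate point, and once it is done, assembling the low- and high-frequency contributions and summing in $\ell^2_k$ against the weights $\langle k\rangle^{s}$ yields the stated estimate.
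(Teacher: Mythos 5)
Your proof is correct, but note that the paper does not actually prove this lemma: it is quoted from Ginibre--Tsutsumi--Velo \cite{GTV}, so the relevant comparison is with that classical argument, and your proposal is essentially it. The reduction to the scalar estimate $\big\|\psi_\delta(t)\int_0^t f(r)\,dr\big\|_{H^b_t}\lesssim\delta^{1-b+b'}\|f\|_{H^{b'}_t}$ by conjugating away the group mode-by-mode, the splitting of temporal frequencies at $1/\delta$, the Taylor expansion of $(e^{it\tau}-1)/(i\tau)$ on the low piece (where $b'\le 0$ is used), and the separate treatment of the two summands of $e^{it\tau}-1$ on the high piece (where $b'>-\frac12$ and $b\le b'+1$ enter exactly where you say) is the standard proof. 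The only cosmetic difference is your handling of the product term $\psi_\delta g$: the usual rendition bounds $\|\psi_\delta g\|_{L^2}$ and $\|\psi_\delta g\|_{H^1}$ directly, exploiting the structure $\hat g=(i\tau)^{-1}\hat f\,\indicator_{|\tau|>1/\delta}$ in both endpoint estimates, and interpolates; you instead prove the general two-term multiplier bound $\|\psi_\delta g\|_{H^b}\lesssim\|g\|_{H^b}+\delta^{-b}\|g\|_{L^2}$ via the Gagliardo seminorm and then insert the same two bounds on $g$. These are the same idea packaged differently, and both close the argument. Two minor caveats: your Gagliardo computation requires $b<1$ (the endpoint $b=1$, which occurs only when $b'=0$, should be handled by the Leibniz rule directly, since the seminorm characterization of $H^b$ degenerates there); and your opening step of taking infima over extensions deserves one more word, because $\psi_\delta$ is supported in $[-2\delta,2\delta]$ while the restricted norm only controls $F$ on $[-\delta,\delta]$, so the inequality should be read (as the paper in fact uses it) as a statement for globally defined $F$, applied to an arbitrary extension --- but this imprecision sits in the paper's statement itself, not in anything you introduced.
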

For $t\in[-\delta/2,\delta/2]$, where $\delta$ is as in Theorem~\ref{thm:I1} or Theorem~\ref{thm:poten}, and $b>1/2$,
\begin{align} \label{udelta2}
&\Big\|\int_0^t e^{L(t-r)}\mathcal{R}(\lambda+2u,\lambda+u,u)(r)\, dr\Big\|_{H^{s_1}}\\ \nonumber& \leq\Big\|\psi_\delta(t)\int_0^t e^{L(t-r)}\mathcal{R}(\lambda+2u,\lambda+u,u)(r) \,dr\Big\|_{L^\infty_t H^{s_1}_x} \\\nonumber
& \lesssim \Big\|\psi_\delta(t)\int_0^t e^{L(t-r)}\mathcal{R}(\lambda+2u,\lambda+u,u)(r)\, dr\Big\|_{X^{s_1,b} }\\ \nonumber
&\lesssim \delta^{\eps/2} \|\mathcal{R}(\lambda+2u,\lambda+u,u)\|_{X^{s_1,-\frac12+\eps}_\delta},
\end{align}
for sufficiently small $\eps>0$.  
\begin{prop}\label{prop:nonlin} For $s>-1/2$, $s_1<\min(s+1,3s+1)$, and $\eps>0$ sufficiently small, we have
$$\|\mathcal{R}(u,v,w) \|_{X^{s_1,-\frac12+\eps}_\delta}\leq C \|u\|_{X^{s,1/2}_\delta} \|v\|_{X^{s,1/2}_\delta}\|w\|_{X^{s,1/2}_\delta}.$$
\end{prop}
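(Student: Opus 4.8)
The plan is to prove the estimate by duality, reducing it to a quadrilinear spacetime bound, and then to exploit the dispersive resonance relation built into the $X^{s,b}$ norms together with Bourgain's $L^6$ Strichartz estimate (Proposition~\ref{prop:B}). Since the statement concerns the restricted norms, I would first extend $u,v,w$ to global functions on $\mathbb R\times\mathbb T$ that nearly realize the infima defining $\|\cdot\|_{X^{s,1/2}_\delta}$, multiply by the cutoff $\psi_\delta$ (which is bounded on $X^{s,b}$ with an arbitrarily small loss in $b$, absorbed into $\eps$), and prove the corresponding global estimate; taking the infimum over extensions then returns the restricted-norm statement. By duality, $\|\mathcal R(u,v,w)\|_{X^{s_1,-1/2+\eps}}$ equals the supremum over $h$ with $\|h\|_{X^{-s_1,1/2-\eps}}=1$ of the pairing $\langle \mathcal R(u,v,w),h\rangle$, which on the Fourier side is a sum over $k_1+k_2+k_3=k$ and an integral over $\tau_1+\tau_2+\tau_3=\tau$ of $\frac{1}{k_1}\widehat u(k_1,\tau_1)\widehat v(k_2,\tau_2)\widehat w(k_3,\tau_3)\overline{\widehat h(k,\tau)}$, restricted to $(k_1+k_2)(k_2+k_3)(k_3+k_1)\neq 0$.

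The central tool is the algebraic identity $\sum_{j}(\tau_j-k_j^3)-(\tau-k^3)=k^3-k_1^3-k_2^3-k_3^3=3(k_1+k_2)(k_2+k_3)(k_3+k_1)$, valid on $k=\sum_j k_j$, $\tau=\sum_j\tau_j$. Writing $L_0=\tau-k^3$ and $L_j=\tau_j-k_j^3$, this forces $\max(\langle L_0\rangle,\langle L_1\rangle,\langle L_2\rangle,\langle L_3\rangle)\gtrsim\langle(k_1+k_2)(k_2+k_3)(k_3+k_1)\rangle=:\langle\Omega\rangle$, precisely on the nonresonant set where $\mathcal R$ is supported. After normalizing each factor (replacing $\widehat u$ by $\langle k_1\rangle^{-s}\langle L_1\rangle^{-1/2}F_1$, and so on, with each $\|F_j\|_2$ equal to the relevant norm), the pairing is controlled by $\sum\int m_f\,m_L\,|F_0F_1F_2F_3|$, where $m_f=\frac{\langle k\rangle^{s_1}}{|k_1|\langle k_1\rangle^s\langle k_2\rangle^s\langle k_3\rangle^s}$ and $m_L=\langle L_0\rangle^{-1/2+\eps}\prod_{j=1}^3\langle L_j\rangle^{-1/2}$. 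The strategy is to spend part of the largest modulation weight on the resonance, bounding it by a negative power of $\langle\Omega\rangle$ to tame $m_f$, and then to distribute the four resulting factors by Hölder, estimating three in $L^6_{t,x}$ via Proposition~\ref{prop:B} and the fourth in $L^2_{t,x}$ (or all four in $L^4_{t,x}$, which is comfortably below the modulation threshold).

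I would organize the estimate by a dyadic decomposition in the frequencies $|k_j|$ and in $\langle\Omega\rangle$, with a case split according to which frequency is largest and whether the configuration is near-resonant. When $|k_1|$ is the largest frequency, the factor $1/|k_1|$ together with $\langle k\rangle^{s_1}\lesssim\langle k_1\rangle^{s_1}$ already renders $m_f\lesssim\langle k_1\rangle^{s_1-s-1}$ bounded, since $s_1<s+1$; this branch is responsible for the constraint $s_1<s+1$. When instead the output frequency is comparable to a large $|k_2|$ or $|k_3|$ while $|k_1|$ is small, $m_f$ is no longer bounded and one must extract a genuine power of $\langle\Omega\rangle$ from the largest modulation to compensate; tracking the exponents in the remaining balanced (high-high-high) regime is what produces the constraint $s_1<3s+1$. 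In each case the modulation weight left after the resonance extraction still exceeds the threshold needed for the $L^6$ or $L^4$ estimates, and the resulting dyadic sums converge because the strict inequalities $s_1<s+1$ and $s_1<3s+1$ leave a positive power of the dyadic parameters to sum.

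I expect the main obstacle to be the near-resonant, low-$k_1$ regime: there the singular factor $1/k_1$ and a potentially large output frequency combine to make $m_f$ far from bounded, and one must verify that the gain $\langle\Omega\rangle^{-\theta}$ extracted from the largest modulation is exactly sufficient under the hypothesis $s_1<3s+1$, while simultaneously retaining enough modulation regularity to run the Strichartz estimates. A secondary technical point is that Proposition~\ref{prop:B} requires $b>1/2$, so the $L^6$ applications sit at the endpoint of the available $1/2$-modulation; this borderline is absorbed by the $\eps$ in the output weight $\langle L_0\rangle^{-1/2+\eps}$ and by the strict resonance gain $\langle\Omega\rangle^{-\eps}$ on the nonresonant set, which defeat the logarithmic endpoint divergence once the dyadic pieces are summed.
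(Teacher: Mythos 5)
Your proposal matches the paper's proof in all essentials: duality against $X^{-s_1,1/2-\eps}$, the identity $\sum_{i=1}^4(\tau_i-k_i^3)=3(k_1+k_2)(k_1+k_3)(k_2+k_3)$ forcing the maximal modulation to dominate the resonance factor, spending that maximal modulation (its factor then estimated in $L^2_{t,x}$ by Plancherel) to tame the multiplier $\frac{|k_1k_2k_3|^{-s}|k_4|^{s_1}}{|k_1|\,(|k_1+k_2||k_1+k_3||k_2+k_3|)^{1/2-}}$, and closing with H\"older using three $L^6_{t,x}$ factors via Proposition~\ref{prop:B}; the only real difference is organizational, in that the paper verifies the multiplier bound directly from elementary inequalities such as $|k_1||k_1+k_2|\gtrsim|k_2|$ and $|k_1+k_2+k_3||k_2+k_3|\gtrsim|k_1|$, split into the cases $s>-1/3$ and $-1/2<s\leq-1/3$, rather than through a dyadic decomposition. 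One warning: your parenthetical suggestion of instead placing all four factors in $L^4_{t,x}$ is \emph{not} interchangeable with the $L^2\times(L^6)^3$ splitting, because reserving modulation exponent $1/3$ for each $L^4$ estimate leaves a resonance gain of only $\langle\Omega\rangle^{1/6-}$ (only the maximal modulation is bounded below by $\langle\Omega\rangle$; the other three can be $O(1)$), and, for instance at $s=0$, the configuration $k_1,k_2=O(1)$, $|k_3|\sim|k_4|\sim N$ (so $\Omega\sim N^2$, multiplier $\sim N^{s_1}$) then caps the method at $s_1\leq 1/3$, far short of the claimed range $s_1<1$.
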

We will prove this proposition later on. Using \eqref{udelta2} and the proposition above  in
\eqref{udelta}, we see that for $t\in[-\delta/2,\delta/2]$, we have (with implicit constant depending on $\lambda$)
\begin{align*}
\|u(t)-e^{tL}g\|_{H^{s_1}}&\lesssim \|u(t)\|_{H^s}+\|u(t)\|_{H^s}^2+\|g\|_{H^s}+\|g\|_{H^s}^2+\int_0^t \big(\|u\|_{H^s}+\|u\|_{H^s}^3\big) dr\\
&+\|u\|_{X^{s,1/2}_\delta}+\|u\|_{X^{s,1/2}_\delta}^3.
\end{align*}
In the rest of the proof the implicit constants also depend on $\|g\|_{H^s}$.
Fix $t$ large.  For $r\leq t$, we have the bound
$$\|u(r)\|_{H^s}\lesssim T(r)\leq T(t).$$
Thus, by the local theory, with $\delta \approx T(t)^{-6}$, we have
$$
\|u(j\delta)-e^{\delta L}u((j-1)\delta)\|_{H^{s_1}}\lesssim T(t)^{3},
$$
for any $j\leq t/\delta\approx t T(t)^6$. Here we used the local theory bound
$$
\|u\|_{X^{s,1/2}_{[(j-1)\delta,\,j\delta]}} \lesssim \|u((j-1)\delta)\|_{H^s}\lesssim T(t).
$$
Using this we obtain (with $J=t/\delta\approx t T(t)^6$)
\begin{align*}
\|u(J\delta)-e^{J \delta L}u(0)\|_{H^{s_1}}
&\leq \sum_{j=1}^J\|e^{(J-j) \delta L}u(j\delta)-e^{(J-j+1) \delta L}u((j-1)\delta)\|_{H^{s_1}}\\
&= \sum_{j=1}^J\| u(j\delta)-e^{ \delta L}u((j-1)\delta)\|_{H^{s_1}}\lesssim J T(t)^3 \approx t T(t)^9.
\end{align*}
This completes the proof of the growth bound stated in Theorem~\ref{theo:main}.

In the case of KdV without potential the local theory bound gives $\delta\approx T(t)^{-3}$ instead of the $T(t)^{-6}$ power. Also taking into account that $T(t)=\langle t\rangle^{\alpha(s)}$, we obtain the growth bound stated in Theorem~\ref{theo:main1}.

Now we will prove the continuity of $N(t):=u(t)-e^{tL}g$ in $H^{s_1}$. Using  \eqref{new_u}, we obtain
\begin{align}\label{NtNtau}
&N(t)-N(\tau)=\mathcal{B}(\lambda+u,u)_k(\tau)-\mathcal{B}(\lambda+u,u)_k(t)\\ \nonumber
&+(e^{ik^3t}-e^{ik^3\tau})\mathcal{B}(\lambda+u,u)_k(0)\\
&+\int_0^t e^{ik^3(t-r)}\mathcal{B}(e^{rL}\partial_r (e^{-rL}\lambda),u)_k(r) dr-\int_0^\tau e^{ik^3(\tau-r)}\mathcal{B}(e^{rL}\partial_r (e^{-rL}\lambda),u)_k(r) dr \nonumber\\ 
&+ \int_0^t e^{ik^3(t-r)}\tilde\rho_k(r) dr- \int_0^\tau e^{ik^3(\tau-r)}\tilde\rho_k(r) dr\nonumber\\
&+\int_0^t e^{ik^3(t-r)}\mathcal{R}(\lambda+2u,\lambda+u,u)_k(r)dr-\int_0^\tau e^{ik^3(\tau-r)}\mathcal{R}(\lambda+2u,\lambda+u,u)_k(r)dr.\nonumber
\end{align}
Fix $\tau$, we will show that the $H^{s_1}$ norm of each line in the formula above converges to zero as $t\to\tau$. We will assume that $t>\tau$ without loss of generality.
For the first line this follows  by using the difference  $u(\tau)-u(t)$, the continuity of the solution in $H^s$, and the a priori bounds for $\mathcal B$. For the second line, we use
the inequality (for any given $\eps\in [0,1]$) 
\be\label{exp}
\big| e^{ik^3t}-e^{ik^3\tau}\big|\lesssim \min(1,|k|^3|t-\tau|)\leq (|k|^3|t-\tau|)^\eps
\ee
and the a priori estimates for $\mathcal B$ in $H^{s_1+3\eps}$ for sufficiently small $\eps>0$, to obtain a bound of the form $|t-\tau|^\eps$. We now explain how to bound the fifth line.  The third and the forth lines can be treated similarly using $H^s$ norms instead of $X^{s,b}$ norms. We write the fifth line as   
\begin{align}\label{term1}
&\big(e^{ik^3(t-\tau)}-1\big)\int_0^\tau e^{ik^3(\tau-r)}\mathcal{R}(\lambda+2u,\lambda+u,u)_k(r)dr\\
&+\int_\tau^t e^{ik^3(t-r)}\mathcal{R}(\lambda+2u,\lambda+u,u)_k(r)dr.\label{term2}
\end{align}
To estimate $H^{s_1}$ norm of  \eqref{term1} we use \eqref{exp} with sufficiently small $\eps>0$ to obtain
\begin{align*}
\|\eqref{term1}\|_{H^{s_1}}\lesssim |t-\tau|^\eps \Big\|\int_0^\tau e^{ik^3(\tau-r)}\mathcal{R}(\lambda+2u,\lambda+u,u)_k(r)dr\Big\|_{H^{s_1+3\eps}}.
\end{align*}
To estimate the norm we divide the integral into $\tau/\delta$ pieces where $\delta$ is given by the local $X^{s,1/2}$ theory. Here $\delta$ depends on 
$\sup_{r\in[0,\tau]} \|u(r)\|_{H^s}$, which is finite due to global well-posedness. Then, we use
\eqref{udelta2} and Proposition~\ref{prop:nonlin} (with $s_1+3\eps$) to estimate each integral.
Finally, the bound for the $H^{s_1}$ norm of \eqref{term2} follows from the gain in $\delta$ in \eqref{udelta2} and Proposition~\ref{prop:nonlin}.

\section{Proof of Proposition~\ref{prop:nonlin}}
Recall that
$$\mathcal{R}(u,v,w)(r,x)=\sum_{k\neq 0} \mathcal{R}(u,v,w)_k(r)  e^{ikx}.$$
We need to prove  that
$$
\|\mathcal{R}(u,v,w)\|_{X^{s_1,-1/2+\eps}_\delta}\lesssim \|u\|_{X^{s,1/2}_\delta}\|v\|_{X^{s,1/2}_\delta}\|w\|_{X^{s,1/2}_\delta}.
$$
As usual this follows by considering the  $X^{s,b}$ norms instead of the restricted versions.
By duality it suffices to prove that
\begin{align}\label{dual}
\Big|\sum_k \int_{\mathbb R}\widehat{\mathcal R }(k,\tau)\widehat g(-k,-\tau)d\tau \Big|&=\Big|\int_{\mathbb R\times \mathbb T} \mathcal R(u,v,w) g \Big|\\
&\lesssim \|u\|_{X^{s,1/2}}\|v\|_{X^{s,1/2}}\|w\|_{X^{s,1/2}}\|g\|_{X^{-s_1,1/2-\eps}}. \nonumber
\end{align}
We note that
$$
\widehat{\mathcal{R}}(k,\tau)=\frac{i}{3}\int_{\tau_1+\tau_2+\tau_3=\tau}\sum_{\stackrel{k_1+k_2+k_3=k}{(k_2+k_3)(k_1+k_2)(k_1+k_3)\neq 0}} \frac{\hat u(k_1,\tau_1)\hat v(k_2,\tau_2)\hat w(k_3,\tau_3)}{k_1}.
$$
Let
\begin{align*}
f_1(k,\tau)&=|\widehat u(k,\tau)||k|^s \langle \tau-k^3\rangle^{1/2},\\
f_2(k,\tau)&=|\widehat v(k,\tau)||k|^s \langle \tau-k^3\rangle^{1/2},\\
f_3(k,\tau)&=|\widehat w(k,\tau)||k|^s \langle \tau-k^3\rangle^{1/2},\\
f_4(k,\tau)&=|\widehat g(k,\tau)| |k|^{-s_1} \langle \tau-k^3\rangle^{1/2-\eps}.
\end{align*}
Note that \eqref{dual} follows from
\be\label{dual1}
\sum_{\stackrel{k_1+k_2+k_3+k_4=0}{(k_2+k_3)(k_1+k_2)(k_1+k_3)\neq 0}} \int_{\tau_1+\tau_2+\tau_3+\tau_4=0}\frac{|k_1k_2k_3|^{-s}|k_4|^{s_1}\prod_{i=1}^4 f_i(k_i,\tau_i)}{|k_1|  \prod_{i=1}^4\langle\tau_i-k_i^3\rangle^{1/2-\eps}}\lesssim \prod_{i=1}^4\|f_i\|_2.
\ee
By Proposition~\ref{prop:B}, we have (for any $\eps>0$)
\be\label{L6}
\Big\|\Big(\frac{f_i |k|^{-\eps}}{\langle \tau-k^3\rangle^{1/2+\eps}}\Big)^\vee \Big\|_{L^6(\mathbb R \times \mathbb T)}\lesssim \|f_i\|_2.
\ee
Using $\tau_1+\tau_2+\tau_3+\tau_4=0$ and $k_1+k_2+k_3+k_4=0$, we have
$$
\sum_{i=1}^4 \tau_i-k_i^3=-k_1^3-k_2^3-k_3^3-k_4^3=3(k_1+k_2)(k_1+k_3)(k_2+k_3).
$$
Therefore
$$
\max_{i=1,2,3,4} \langle \tau_i-k_i^3 \rangle \gtrsim |k_1+k_2| |k_1+k_3| |k_2+k_3|.
$$
Since the inequality \eqref{dual1} is symmetric in $f_i$'s, it does not matter which of these terms is the maximum. Therefore without loss of generality we assume that
$$
\langle \tau_1-k_1^3\rangle =\max_{i=1,2,3,4} \langle \tau_i-k_i^3 \rangle  \gtrsim |k_1+k_2| |k_1+k_3| |k_2+k_3|.
$$
This implies that
\be\label{maxtau}
\prod_{i=1}^4\langle\tau_i-k_i^3\rangle^{1/2-\eps}\gtrsim (|k_1+k_2| |k_1+k_3| |k_2+k_3|)^{1/2-7\eps}\prod_{i=2}^4\langle\tau_i-k_i^3\rangle^{1/2+\eps}.
\ee
Also note that (since all factors are nonzero and $k_1+k_2+k_3+k_4=0$)
\be\label{kis}
|k_1+k_2| |k_1+k_3| |k_2+k_3|\gtrsim |k_i|,\,\,\,\,i=1,2,3,4.
\ee
Now we will prove that for $s>-1/2$, $s_1<\min(s+1,3s+1)$ and for $\eps$ sufficiently small,
\be\label{multiplier}
\frac{|k_1k_2k_3|^{-s}|k_4|^{s_1} }{|k_1| (|k_1+k_2| |k_1+k_3| |k_2+k_3|)^{1/2-7\eps}} \lesssim |k_1k_2k_3k_4|^{-\eps}.
\ee
By \eqref{kis}, this follows from
$$\frac{|k_1k_2k_3|^{-s}|k_4|^{s_1} }{|k_1| (|k_1+k_2| |k_1+k_3| |k_2+k_3|)^{1/2-11\eps}}\lesssim 1.
$$

First consider the case $s>-1/3 $, $s_1<\min(3s+1,s+1)$. Without loss of generality we can assume that $s_1\geq 0$.
Let $M=\max(|k_1|,|k_2|,|k_3|)$. Using $|k_1||k_1+k_2|\gtrsim|k_2|$ and $|k_1||k_1+k_3||k_3+k_2|\gtrsim|k_2|$, and by symmetry of $k_2, k_3$, we have
$$|k_1| (|k_1+k_2| |k_1+k_3| |k_2+k_3|)^{1/2-11\eps}\gtrsim M^{1-22\eps}.$$
Thus
$$\frac{|k_1k_2k_3|^{-s}|k_4|^{s_1} }{|k_1| (|k_1+k_2| |k_1+k_3| |k_2+k_3|)^{1/2-11\eps}}\lesssim
\frac{|k_1k_2k_3|^{-s}|k_4|^{s_1} }{M^{1-22\eps}}.
$$
Since $|k_1k_2k_3|^{-s}\leq M^{-3s}$ for $s<0$ and
$| k_1k_2k_3|^{-s}\leq M^{-s}$ for $s\geq 0$, we have $|k_1k_2k_3|^{-s}\lesssim M^{-\min(s,3s)}$.
Using this, the inequality $|k_4|\lesssim M$, and $0\leq s_1<\min(3s+1,s+1)$, we bound the multiplier by
$$ M^{-\min(s,3s)}M^{s_1-1+22\eps}\lesssim 1 \text{ for sufficiently small } \epsilon.
$$

Second consider the case $-1/2<s\leq-1/3$, $s_1<3s+1=\min(3s+1,s+1)\leq 0$. Using $|k_4|=|k_1+k_2+k_3|$
and $|k_1+k_2+k_3||k_2+k_3|\gtrsim |k_1|$, we have
\begin{multline} \nonumber
\frac{|k_1k_2k_3|^{-s}|k_4|^{s_1} }{|k_1| (|k_1+k_2| |k_1+k_3| |k_2+k_3|)^{1/2-11\eps}}\\
\lesssim \frac{|k_2k_3|^{-s}  }{|k_1|^{1+s-s_1} (|k_1+k_2| |k_1+k_3|)^{1/2-11\eps} |k_2+k_3|^{1/2+s_1-11\eps}}.
\end{multline}
Now using $|k_1||k_1+k_i|\gtrsim |k_i|$, we bound the multiplier by
$$
\frac{|k_2k_3|^{-s-\frac{1+s-s_1}{2}}  }{ (|k_1+k_2| |k_1+k_3|)^{\frac{s_1-s}{2}-11\eps} |k_2+k_3|^{1/2+s_1-11\eps}}  
\lesssim |k_2k_3|^{\frac{s_1-(3s+1)}{2}}  \lesssim 1.
$$

This finishes the proof of \eqref{multiplier}. Using \eqref{multiplier} and \eqref{maxtau} in \eqref{dual1} (and eliminating $|k_1|^{-\eps}$), we obtain
\begin{align*}
\eqref{dual1}&\lesssim   \sum_{k_1+k_2+k_3+k_4=0 } \int_{\tau_1+\tau_2+\tau_3+\tau_4=0}\frac{|k_2k_3k_4|^{-\eps} \prod_{i=1}^4 f_i(k_i,\tau_i)}{  \prod_{i=2}^4\langle\tau_i-k_i^3\rangle^{1/2+\eps}}.
\end{align*}
By Plancherel, and the convolution structure we can rewrite this as
\begin{align*}
\int_{\mathbb R \times \mathbb T}\widehat{f_1}(x,t) \prod_{i=2}^4\Big(\frac{f_i |k|^{-\eps}}{\langle \tau-k^3\rangle^{1/2+\eps}}\Big)^\vee(x,t) &\leq \|f_1\|_{L^2(\mathbb R \times \mathbb T)} \prod_{i=2}^4\Big\|\Big(\frac{f_i |k|^{-\eps}}{\langle \tau-k^3\rangle^{1/2+\eps}}\Big)^\vee \Big\|_{L^6(\mathbb R \times \mathbb T)}\\
&\lesssim \prod_{i=1}^4\|f_i\|_2.
\end{align*}
In the last inequality we used \eqref{L6}.

\section{Appendix: Smoothing for Modified KdV}
In this section we consider the modified KdV equation (mKdV) in the form 
\begin{align}\label{mkdv}
&v_t+v_{xxx}=6v^2v_x\\\nonumber
&v(x,0)=g(x)\in H^s(\mathbb T),\,\,\,\,\,\,s>1/2.
\end{align}
The mKdV equation satisfies both mean and $L^2$ conservation. 
Set $\mu=\langle v^2\rangle=\langle g^2\rangle$. Note that $w(x,t)=v(x-6\mu t,t)$ satisfies 
\begin{align}\label{mkdv1}
&w_t+w_{xxx}=6(w^2-\langle w^2\rangle) w_x\\ \nonumber
&w(x,0)=g(x)\in H^s(\mathbb T),\,\,\,\,\,\,s>1/2.
\end{align}
\begin{theorem}
\label{theo:mkdv}
Fix $s>1/2$ and $s_1<\min(3s-1,s+1)$. Consider the real valued solution of mKdV \eqref{mkdv}
on $\mathbb{T}\times \mathbb R$ with mean-zero initial data $v(x,0)=g(x)\in H^s$.   Then $v(t)-e^{tL}g\in C^0_t(\mathbb R;H^{s_1}_x)$,
where $L=-\partial_x^3-6\langle g^2\rangle \partial_x$.
\end{theorem}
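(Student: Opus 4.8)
The plan is to mirror the argument of Section~\ref{sec:evol}, replacing the quadratic KdV nonlinearity by the cubic mKdV nonlinearity and performing \emph{one} differentiation by parts before passing to the restricted norm spaces. Working with the shifted equation \eqref{mkdv1}, I would first write $w$ on the Fourier side. Since $6(w^2-\langle w^2\rangle)w_x=2(w^3)_x-6\langle g^2\rangle w_x$ and the linear term $-6\langle g^2\rangle w_x$ is exactly the part of $L$ beyond $-\partial_x^3$, the substitution $w_k(t)=z_k(t)e^{i(k^3-6\langle g^2\rangle k)t}$ together with the identity $(k_1+k_2+k_3)^3-k_1^3-k_2^3-k_3^3=3(k_1+k_2)(k_2+k_3)(k_1+k_3)$ (note that the $\langle g^2\rangle$ contributions to the phase cancel) turns the equation into
\be\label{eq:mkdvz}
\partial_t z_k=2ik\sum_{k_1+k_2+k_3=k}e^{-3it(k_1+k_2)(k_2+k_3)(k_1+k_3)}z_{k_1}z_{k_2}z_{k_3}.
\ee
Writing $\Phi=(k_1+k_2)(k_2+k_3)(k_1+k_3)$, I would split this sum into its resonant part $\{\Phi=0\}$ and its nonresonant complement.

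The resonant set is $\{k_1+k_2=0\}\cup\{k_2+k_3=0\}\cup\{k_1+k_3=0\}$. A short inclusion–exclusion computation, using $z_{-j}=\overline{z_j}$ and $\sum_j|z_j|^2=\langle g^2\rangle$ (the conserved $L^2$ mass), shows that the resonant contribution collapses to the mass term $6i\langle g^2\rangle k\,z_k$, which is annihilated by the Galilean shift already built into $L$, plus the genuine gauge term $-6ik|z_k|^2z_k$; this is the exact analogue of the cancellation producing $\rho_k$ in Proposition~\ref{thm:dbp}. The surviving term is estimated directly: at each time $\big\||k|\,|z_k|^2z_k\big\|_{H^{s_1}}\lesssim\|z\|_{H^s}^3$ whenever $2s_1+2-6s\le 0$, i.e. $s_1\le 3s-1$, by the embedding $\ell^2\hookrightarrow\ell^6$; integrating in $r$ costs only a factor of $t$ and keeps this term in $C^0_tH^{s_1}_x$, which is the source of the $3s-1$ threshold. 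To the nonresonant sum I would apply differentiation by parts through $e^{-3it\Phi}=\tfrac{i}{3\Phi}\partial_te^{-3it\Phi}$, producing a trilinear boundary term $B(z,z,z)_k\sim k\sum_{\Phi\neq0}\Phi^{-1}z_{k_1}z_{k_2}z_{k_3}e^{-3it\Phi}$ and, after feeding \eqref{eq:mkdvz} back into $\partial_t(z_{k_1}z_{k_2}z_{k_3})$, a quintic interior term $R$.

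Transforming back to $w$ and then $v$ yields an identity for $v(t)-e^{tL}g$ of the same shape as \eqref{new_u}, with three pieces to control. The boundary term satisfies $\|B\|_{H^{s_1}}\lesssim\|w\|_{H^s}^3$ for $s_1<s+1$: using $|\Phi|\gtrsim\max(|k_1|,|k_2|,|k_3|)$ (the analogue of \eqref{kis}), the factor $|k|\,\Phi^{-1}$ converts into exactly one gained derivative, just as in Lemma~\ref{apriori}, which pins the $s+1$ threshold. The quintic term $R$ is handled by the restricted norm method: by duality its $X^{s_1,-1/2+\eps}_\delta$ norm reduces to a six-linear space–time form, which I would estimate by grouping the six factors into two triples, applying Cauchy–Schwarz, and then $\|f_if_jf_l\|_{L^2}\le\|f_i\|_{L^6}\|f_j\|_{L^6}\|f_l\|_{L^6}$, each $L^6$ norm being controlled through Proposition~\ref{prop:B}; the small surplus powers of modulation needed to reach $X^{\eps,1/2+\eps}$ are extracted from $\max_i\la\tau_i-k_i^3\ra$, which dominates the relevant phase by the algebraic relation between $\sum(\tau_i-k_i^3)$ and the frequencies.

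The hard part will be this quintic multiplier bound. After one differentiation by parts the nonlinearity carries two derivative factors — one $|k|$ from $(w^3)_x$ and one from the reinserted $\partial_t z$ — while the single denominator $\Phi$ supplies essentially one derivative of smoothing; the remaining derivative must be absorbed by the modulation weights of the six $X^{s,1/2}$ factors, and checking the resulting multiplier for every frequency configuration, in particular when some $k_i+k_j$ are small, is the delicate step that forces the range $s_1<\min(3s-1,s+1)$ and the restriction $s>1/2$. Once the local trilinear and quintic bounds are in hand, continuity of $v(t)-e^{tL}g$ in $H^{s_1}$ follows from the same difference decomposition as for \eqref{NtNtau}, and the global-in-time conclusion follows by summing over $O(t/\delta)$ local intervals, using that mKdV is globally well-posed in $H^s$ for $s>1/2$ so that $\sup_{r\le t}\|w(r)\|_{H^s}<\infty$; this $H^s$ a priori control, unavailable below $s=1/2$, is precisely why the theorem demands $s>1/2$.
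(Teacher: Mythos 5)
Your proposal takes a genuinely different route from the paper, and as written it contains one fatal gap and one bookkeeping error. The paper performs no normal form analysis on mKdV at all: it uses the Miura transform $\mathbf M w = w_x+w^2-\la w^2\ra$, which by the quoted theorem of Colliander--Keel--Staffilani--Takaoka--Tao is a locally Lipschitz bijection from $H^s_0$ to $H^{s-1}_0$ with locally Lipschitz inverse. Since $u=\mathbf M w$ solves KdV with data $\mathbf M g\in H^{s-1}$, one applies Theorem~\ref{theo:main1} at regularity $s-1$ (this is exactly where $s>1/2$, i.e.\ $s-1>-1/2$, enters), getting $\rho<\min(3s-2,s)$, and then pulls back with $\mathbf M^{-1}$, gaining one derivative and hence the stated range $s_1<\min(3s-1,s+1)$; a one-line algebra-property computation handles the commutation of $\mathbf M$ with the free evolution. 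No trilinear, let alone quintic, estimate is needed. Your plan instead redoes the machinery of Sections 3 and 4 directly on the cubic equation. That is a legitimate alternative strategy --- and, unlike the Miura reduction, it is integrability-free, so it would survive perturbations of the nonlinearity --- but in that strategy the analogue of Proposition~\ref{prop:nonlin}, namely the $X^{s_1,-1/2+\eps}_\delta$ bound for the quintic term, \emph{is} the proof, and you do not supply it: you explicitly defer ``checking the resulting multiplier for every frequency configuration,'' which for the merely trilinear KdV term occupies the whole of the paper's Section 4 and is precisely where the admissible range of $s_1$ is decided. Asserting that this unexecuted six-linear multiplier analysis ``forces'' $s_1<\min(3s-1,s+1)$ is a conjecture, not an argument; nothing in your outline rules out a failing frequency configuration (say, two factors $k_i+k_j$ of size $O(1)$ with the dual weight sitting on the largest frequency). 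This is the fatal gap.

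The second problem is your treatment of the resonant mass term. With the substitution $w_k=z_ke^{i(k^3-6\la g^2\ra k)t}$ the Galilean contributions to the oscillating phases cancel identically, as you note --- but that means the shift has already been spent. The resonant part of the cubic sum is then $6i\la g^2\ra k\,z_k-6ik|z_k|^2z_k$, and nothing in the equation ``annihilates'' the first term: it is a non-oscillating linear term carrying a full power of $k$, so it can neither be integrated in time nor estimated perturbatively (that would require $s_1\le s-1$, a loss rather than a gain). It must be removed by a further phase change $z_k\mapsto z_ke^{-6i\la g^2\ra kt}$, which leaves the nonresonant phases untouched, and it is exactly this phase that converts the comparison from $e^{-t\partial_x^3}$ to $e^{tL}$. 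Doing this honestly also fixes a sign: the mass term generates the phase $e^{+6i\la g^2\ra kt}$, so the operator must be $L=-\partial_x^3+6\la g^2\ra\partial_x$; the minus sign in the theorem statement is a typo, which the paper's own proof confirms, since its final identity $(e^{tL}g)(x-6\la g^2\ra t)=(e^{-\partial_x^3t}g)(x)$ holds only with the plus sign. Your one-line ``annihilated by the Galilean shift already built into $L$'' hides exactly this point, and with the stated $L$ the claim is false. The rest of your resonant analysis is sound --- the threshold $s_1\le 3s-1$ for $k|z_k|^2z_k$ via $\ell^2\hookrightarrow\ell^6$, the bound $|\Phi|\gtrsim\max_i|k_i|$ for the boundary term, and the use of global $H^s$ well-posedness to sum over $O(t/\delta)$ intervals all match the numerology the paper obtains for free from the Miura reduction --- so the architecture is plausible, but until the quintic estimate is proved and the gauge step corrected, this is an outline rather than a proof.
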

\begin{proof}
Let $w$ be a solution of \eqref{mkdv1} with a mean-zero initial data $g\in H^s(\mathbb T)$, $s>1/2$.
We will use the Miura transform $\mathbf M w:=w_x+w^2-\langle w^2\rangle$. Following \cite{ckstt2}, we note that if $w$ solves \eqref{mkdv1} with a mean-zero initial data, then $u=\mathbf{M}w$ solves the KdV equation
$$
u_t+u_{xxx}=6u u_x
$$
with mean-zero initial data $u(x,0)=\mathbf Mg(x)$. 
The following theorem was proved in \cite{ckstt2} (here $H^s_0(\mathbb T)=\{u\in H^s(\mathbb T): \langle u\rangle=0\}$):
\begin{theorem}\cite{ckstt2} Let $s \geq 1/2$. Then the map $\mathbf M$ is a locally Lipschitz bijection from $H^s_0(\mathbb T)$ to $H^{s-1}_0(\mathbb T)$, and the inverse map $\mathbf M^{-1}$ is   locally Lipschitz   from $H^{s-1}_0(\mathbb T)$ to $H^s_0(\mathbb T)$.
\end{theorem}
This implies in particular that $\mathbf M$ maps $C^0_t(\mathbb R;H^{s}_0(\mathbb T))$ to 
$C^0_t(\mathbb R;H^{s-1}_0(\mathbb T))$, and $\mathbf M^{-1}$ maps  $C^0_t(\mathbb R;H^{s-1}_0(\mathbb T))$ to $C^0_t(\mathbb R;H^{s }_0(\mathbb T))$. The theorem will follow from this and Theorem~\ref{theo:main1}. Indeed, by Theorem~\ref{theo:main1}, $u=\mathbf Mw$ satisfies
$$
u(t)-e^{-\partial_x^3 t}\mathbf Mg \in C^0_t(\mathbb R;H^{\rho}_0(\mathbb T))
$$ 
for any $\rho<\min(3s-2,s)$. This implies that
\be\label{Mu1}
\mathbf M^{-1}u(t)-\mathbf M^{-1}e^{-\partial_x^3 t}\mathbf Mg \in C^0_t(\mathbb R;H^{s_1}_0(\mathbb T))
\ee
for any $s_1<\min(3s+1,s+1)$. In addition, since $\partial_x$ and $e^{-\partial_x^3 t}$ commutes, we have 
\begin{align*}
e^{-\partial_x^3 t}\mathbf Mg-\mathbf M e^{-\partial_x^3 t}g &= (e^{-\partial_x^3 t} g)^2 -e^{-\partial_x^3 t}(g^2) -\langle (e^{-\partial_x^3 t}g)^2\rangle+\langle g^2\rangle\\
&=(e^{-\partial_x^3 t} g)^2 -e^{-\partial_x^3 t}(g^2)\in C^0_t(\mathbb R;H^{s}_0(\mathbb T)),
\end{align*}
by the algebra property of $H^s$, $s>1/2$. Therefore,
\be\label{Mu2}
\mathbf M^{-1}e^{-\partial_x^3 t}\mathbf Mg-  e^{-\partial_x^3 t}g  \in C^0_t(\mathbb R;H^{s+1}_0(\mathbb T)).
\ee
Writing
$$
w(t)-e^{-\partial_x^3 t}g=\mathbf M^{-1} u(t) - \mathbf M^{-1}e^{-\partial_x^3 t}\mathbf Mg + \mathbf M^{-1}e^{-\partial_x^3 t}\mathbf Mg-e^{-\partial_x^3 t}g,
$$
and using \eqref{Mu1} and \eqref{Mu2}, we obtain 
$$w(t)-e^{-\partial_x^3 t}g\in C^0_t(\mathbb R;H^{s_1}_0(\mathbb T)),\,\,\,\,\,s_1<\min(3s+1,s+1).$$
To obtain the result for $v$, let $F(x,t):=v(x,t)-(e^{Lt}g)(x)$. Note that
$$
F(x-6\mu t,t)=w(x,t)-(e^{Lt}g)(x-6\mu t)=w(x,t)-(e^{-\partial_x^3t}g)(x)\in C^0_t(\mathbb R; H^{s_1}_0(\mathbb T)),
$$
for $s_1<\min(3s+1,s+1)$. This yields the statement for $v$ by translation invariance of Sobolev spaces.
\end{proof}

\end{document}